\newcommand{\draft}{}
\newcommand{\markerO}{\fbox{\rule{0pt}{0.1ex}\textbf{\textsf{Olaf}}}}
\newcommand{\markerF}{\fbox{\rule{0pt}{0.1ex}\textbf{Fernando}}}
\newcommand{\markerJ}{\fbox{\rule{0pt}{0.1ex}\textbf{John}}}
\newcommand{\look}[1]{\markerO \textbf{*}
    \footnote{ #1 }}
\newcommand{\lookO}[1]{\markerO\textbf{*}
    \footnote{\textbf{\textsf{Olaf:}} #1 }}
\newcommand{\lookF}[1]{\markerF\textbf{*}
    \footnote{\textbf{Fernando:} #1 }}
\newcommand{\lookJ}[1]{\markerJ\textbf{*}
	\footnote{\textbf{John:} #1 }}
\newcommand{\Ignore}[1]{}
  \renewcommand{\look}[1]{}
  \renewcommand{\lookO}[1]{}%
  \renewcommand{\lookF}[1]{}%
  \renewcommand{\lookJ}[1]{}%
\numberwithin{equation}{section}
\newcounter{myenumi}
  \theoremstyle{plain} 
  \newtheorem{theorem}{Theorem}[section]
  \newtheorem{maintheorem}[theorem]{Main Theorem}
  \newtheorem{proposition}[theorem]{Proposition}
  \newtheorem{lemma}[theorem]{Lemma}
  \newtheorem{corollary}[theorem]{Corollary}
  \newtheorem{conjecture}[theorem]{Conjecture}
  \theoremstyle{definition}       
  \newtheorem{definition}[theorem]{Definition}
  \newtheorem{assumption}[theorem]{Assumption}
  \newtheorem{example}[theorem]{Example}
  \newtheorem{remark}[theorem]{Remark}
  \newtheorem*{remark*}{Remark}
  \newtheorem{notation}[theorem]{Notation}
  \newcommand{\myfont}{\sffamily}
  \newtheoremstyle{mythmstyle}
  {\topsep}
  {\topsep}
  {\itshape}
  {}
  {\bfseries \myfont}
  {.}
  {.5em}
  {}
  \newtheoremstyle{mydefstyle}
  {\topsep}
  {\topsep}
  {\normalfont}
  {}
  {\bfseries \myfont}
  {.}
  {.5em}
  {}
  \theoremstyle{mythmstyle}       
  \newtheorem{theorem}{Theorem}[section]
  \newtheorem{proposition}[theorem]{Proposition}
  \newtheorem{lemma}[theorem]{Lemma}
  \newtheorem{corollary}[theorem]{Corollary}
  \newcounter{intro}
  \theoremstyle{mydefstyle}        
  \newtheorem{definition}[theorem]{Definition}
  \newtheorem{example}[theorem]{Example}
  \newtheorem{remark}[theorem]{Remark}
  \newtheorem*{remark*}{Remark}
\let\expandafter\oldproof\csname\string\proof\endcsname
  \let\oldendproof\endproof
  \renewenvironment{proof}[1][\bfseries\myfont\proofname]{%
    \oldproof[\bfseries \myfont #1]%
  }{\oldendproof}
\DeclareMathSymbol{\shortminus}{\mathbin}{AMSa}{"39}
\newcommand{\compl}[1]{#1^{\mathsf c}}
\renewcommand{\phi}{\varphi}   
\newcommand{\R}{\mathbb{R}} 
\newcommand{\N}{\mathbb{N}} 
\newcommand{\1}{\mathbbm 1}                    
\newcommand{\cC}{{\mathcal C}}
\newcommand{\cL}{{\mathcal L}}
\newcommand{\cP}{{\mathcal P}}
\newcommand{\cR}{{\mathcal R}}
\newcommand{\quadtext}[1]{\quad\text{#1}\quad}
\DeclareMathAlphabet{\Ma}{U}{msa}{m}{n}
\DeclareMathAlphabet{\Mb}{U}{msb}{m}{n}
\DeclareMathAlphabet{\Meuf}{U}{euf}{m}{n}
\DeclareSymbolFont{ASMa}{U}{msa}{m}{n}
\DeclareSymbolFont{ASMb}{U}{msb}{m}{n}
\newcommand{\lessWithNumber}[1]{\stackrel{#1}\preccurlyeq}
\NewDocumentCommand{\less}{o}{%
  \IfNoValueTF{#1}
    {\preccurlyeq}
    {\lessWithNumber{#1}}%
}
\tikzset{
    position/.style args={#1:#2 from #3}{
        at=(#3.#1), shift=(#1:#2)
    }
}
\title{Geometric and spectral analysis on weighted digraphs}
\author{Fernando Lled\'o and Ignacio Sevillano} %
\address{Department of Mathematics, University Carlos III de Madrid,
  Avda. de la Universidad 30, 28911. Legan\'es (Madrid), Spain and
  Instituto de Ciencias Matem\'aticas (CSIC-UAM-UC3M-UCM), Madrid}
\email{flledo@math.uc3m.es}
\email{nachosemu@gmail.com}
\thanks{
The first named author was supported by Spanish Ministry of Economy and Competitiveness through project DGI MTM2017-84098-P, from the
Severo Ochoa Programme for Centres of Excellence in R{\&}D (SEV-2015-0554) and from the Spanish National Research Council, through the 
\textit{Ayuda extraordinaria a Centros de Excelencia Severo Ochoa} (20205CEX001) and grant
6G-INTEGRATION-3 (TSI-063000-2021-127), funded by UNICO program (under the Next Generation EU umbrella funds), Ministerio de Asuntos Económicos y Transición Digital of Spain.
}
\keywords{Directed graphs, spectral graph theory, discrete Laplacian, sinks and sources, circulations and flows, value and capacity}
 \subjclass[2010]{05C20, 05C50, 05C10, 05C21, 47B39}
\begin{document}


\ifthenelse{\isundefined \draft}
{\date{\today}}  
{\date{\today. }} 

\begin{abstract} 
In this article we give a geometrical description of the (in general non-selfadjoint) in/out Laplacian $\cL^{+/-}=(d^{+/-})^*d$ and adjacency matrix on digraphs with arbitrary weights, where $(d^{+/-})^*$ is the adjoint of the evaluation map $d^{+/-}$ on the terminal/initial vertex of each arc and $d=d^{+}+d^-$ denotes the discrete gradient.
We prove that the multiplicity of the zero eigenvalue of $\cL^{+/-}=(d^{+/-})^*d$ coincides with the number of sources/sinks of the digraph. We also show that for an acyclic digraph with combinatorial weights the spectrum is contained in the set of non-zero integers. The geometrical perspective allows to interpret the set of circulations $\cC$ of a weighted digraph as coclosed forms on the arcs, i.e. as the kernel of the discrete divergence $d^*$. Moreover, $\cC$ is perpendicular to the set of discrete gradients of functions on the vertices. We also give formulas to compute the capacity of a cut and the value of a flow in terms of $\cL^-$ and $d$. 
We illustrate the results with many concrete examples.
\end{abstract}

\maketitle


%
\section{Introduction}
\label{sec:intro}
%

Spectral graph theory for undirected graphs has been widely studied in the last decades, also in applications to different fields like analysis, combinatorics, discrete geometry or computer science (see, e.g., \cite{chung97,cvetkovic:95,spielman:12}). It consists of the interplay between the spectra of different operators defined on the graph (typically the adjacency or Laplacian matrices) and the geometry, the topology, or the combinatorics of the underlying graph. Spectral graph theory for directed graphs (or digraphs for short) requires often a refined analysis of notions introduced for undirected graphs 
(see \cite{b-brualdi:92} and \cite{marijuan:18} for the relation between connectivity and strong connectivity). Other examples are the 
concept of chromatic number in \cite{neumann-lara:82,mohar:10}), the notion of 
complementarity eigenvalues in \cite{trevisan:23}
or the refinement of Cheeger's constant in the context of digraphs (see, for example, \cite{chung:05} or 
\cite{balti:17,anne-et-al:19} where, in the later reference, also unbounded operators on infinite graphs are considered). 
One of the main differences between (undirected) graphs and digraphs in this framework is that, in general, the adjacency matrices and Laplacian matrices are not selfadjoint for digraphs as opposed to undirected graphs. This leads to the inconvenience that, in general, the corresponding spectra are not real for digraphs. 
Several authors have considered symmetric versions of adjacency or Laplacian matrices on digraphs and relate certain properties of the digraph to the corresponding real spectra 
(see, for example, \cite{chung:05,guo-mohar:17,sander:20,sahoo:21}). 
Alternatively, one can analyze the complex spectrum of non-Hermitian Laplacian or adjacency matrices (cf., 
\cite{godsil:82,dalfo:17}). In this case one can localize the spectrum in certain convex regions of the complex plane \cite{agaev:05,anne-et-al:19,bauer:12}.
In \cite{gnang-murphy:20} the authors generalize variational estimates in the Rayleigh quotients quantifying the discrepancy between Hermitian and non-Hermitian
matrices. Numerical and other applications of directed Laplacians on networks are considered in 
\cite{boley:21,furutani:20,bjorner-lovasz:92,veerman:19}.
Finally, we refer to the classical article \cite{brualdi:10} for a review mainly devoted to the oriented adjacency matrix or the recent books covering relevant topics on digraphs \cite{bang-jensen-gutin:18,bondy-murty:08}; for applications see also \cite{bang-jensen-gutin:10,marijuan:16} and references therein.

In this article we give first a geometrical description of Laplacians and adjacency operators on digraphs with arbitrary weights. Given a digraph $G=(V,A,\partial)$ with incidence $\partial$ and weights on vertices $V$ and arcs $A$ denoted by $m$
we decompose the discrete gradient $d$ and the weighted discrete divergence $d^*$ in terms of the evaluation maps (and their adjoints)
\[
 d^{\pm}\colon \ell_2(V,m)\to \ell_2(A,m)\quadtext{with} 
               \left(d^{\pm}\right)\!(a)=\pm \varphi\!\left(\partial_{\pm}^{} a\right)\;,
\]
where $\partial_{-/+} a$ denotes the initial/terminal vertex of the arc $a\in A$ 
(see also Section~\ref{sec:graph.theory} for additional details and motivation).
The discrete gradient $d$ decomposes as $d=d^+ + d^-$ and the weighted Laplacian is
expressed as the second order operator
$\cL=d^*d$ which can be written as $\cL=\cL^{+}+\cL^{-}$, where the in/out Laplacians are defined by 
\[
 \cL^{+/-}:=\left(d^{+/-}\right)^* d\;.
\]
Similar formulas also hold for the adjacency operator (cf.~Section~\ref{sec:graph.theory}). Note that, by construction, the in/out Laplacians are not selfadjoint and that the usual discrete Laplacian for arbitrary weights (even considering a magnetic field, see, for example, \cite[Section~3]{fclp:22a}) does not see the underlying orientation of the graph. We refer to \cite{lim:20} for a higher order version of the Laplacian on graphs.

Writing the discrete gradient, the divergence or the Laplacians as abstract operators (the corresponding matrices being just {\em coordinates} of the operators once a numbering of vertices and edges are chosen) has many advantages besides its elegance. It allows, for example, to show that certain spectral properties of the graph are intrinsic to the operator and independent of the weights. In many applications, the use of weights is an important ingredient (e.g., capacities in transportation networks or random walks). Moreover, the geometrical point of view emphasized in this article connects naturally with the language of partial differential equations, generalizes easily to the context 
of infinite graphs and facilitates to show spectral characterizations of graph properties independently of the chosen weights. In fact, most of the references mentioned in the first paragraph consider, without stating it explicitly, combinatorial or normalized weights (in some cases edge weights are also used). In addition, natural connections to so-called quantum graphs can be established by this approach and using so-called normalized weights. 
Recall that a metric (or quantum) graph is an intermediate structure between discrete graphs and manifolds (see, e.g., \cite{cattaneo:97,kuchment:08,lledo-post:08}). 
A quantum graph is defined as a metric graph together with a 
selfadjoint operator modeling the dynamics of the system. A typical example is a Laplace-
like operator acting on each edge as the second derivative with certain conditions on the
function value and its derivative at each vertex, turning it into a selfadjoint operator.
We also mention some results that relate central geometrical notions like isospectrality or Cheeger's inequalities in the context of graphs to illustrate the deep connection between graphs and manifolds \cite{alon:85,brooks:99,llpp:15}. 

In this article we give a spectral characterization of sinks and sources of weighted digraphs. We also present a geometrical generalization of circulations and flows on weighted digraphs and give a geometrical interpretation of these combinatorial quantities as coclosed forms on arcs. We show that circulations (which have a natural {\em Kirchhoff-type conservation law} at each vertex) are orthogonal to the discrete gradient. We generalize this result to flows on digraphs, where the conservation law now applies at all vertices except to a distinguished set of vertices $W$, which can be interpreted as the union of sources and sinks of a transportation network. In this case, the flow turns to be orthogonal to the discrete gradient applied to functions with Dirichlet conditions at $W$.
Finally, we give new expressions for the capacity of a cut and the value of a flow in weighted digraphs in terms of the out Laplacian and the discrete derivative and divergence of the graph. For this, it is useful to reinterpret the capacity of the digraph as a weight on the arcs and relate the capacity of a cut with the corresponding capacity-weighted out-Laplacian. We illustrate through the whole article the definitions and results with many concrete examples.\\[1mm]

\paragraph{\bf Structure of the article and main results:} In the next section, we introduce the necessary notions and results for weighted digraphs needed later. In particular, we define and give a natural geometrical description of in/out components of the weighted Laplacian and adjacency operators.
In Section~\ref{sec:connectedness} we introduce relevant notions on a weighted digraph, including the notions of sources and sinks. 
In Proposition~\ref{prop:compression_on_SF} and
\ref{prop:spec_par} we analyze the compressions of the in/out Laplacians to sources/sinks and describe a decomposition of their spectra. The main result in this section is a general spectral characterization of sources and sinks of a weighted digraph. We show in  Theorem~\ref{theo:zero_io_lap} that the multiplicity of the zero
eigenvalue of $\cL^{+/-}$ coincides with the number of sources/sinks of the weighted digraph. We prove that for an acyclic digraph with combinatorial weights the spectrum is contained in the set of non-zero integers. The relation $u \rightsquigarrow v$ (the  vertex $v$ is reachable from $v$ with a directed path) defines a preorder in the vertex set $V$ and we consider chains and maximal chains in the study of properties of digraphs.

In Section~\ref{sec:flows} we apply the previous geometrical analysis to networks. 
The set of circulations $\cC$ of a network, i.e., functions on the arcs with a natural Kirchhoff type condition at any vertex, is perpendicular to the
discrete gradient of functions on the vertices {(\em exact one forms)} and
can be described geometrically as {\em coclosed forms on arcs}, i.e., as the kernel of the discrete divergence:
\[
 \cC=\mathrm{ker}d^* \quadtext{and} \cC \perp \{d\varphi\mid \varphi\in\ell_2(V,m)\}\;.
\]
Similar formulas hold in transportation networks replacing the discrete gradient by the gradient with Dirichlet conditions on the sink and source vertices.
Finally, we give formulas to compute the capacity of a cut and the value of a flow in terms
of $\cL^-$ and $d$, respectively. In this case the out Laplacian refers to the weighted Laplacian with the capacity function of the network as a weight on the arcs.\\[1mm]

\paragraph{\bf Notation:}
 $G=(V,A,\partial)$ denotes a finite digraph with vertices $V$, arcs $A$ and incidence operator $\partial$. We use the index $+$ (resp. $-$) to denote the \emph{in} (resp. \emph{out}) components of the graph or operator on it. For example, $A_v^+$ (respectively $A_v^-$) denotes the arcs ending (resp. starting) at the vertex $v$. We allow multiple arcs and loops and we say that $G$ is a multidigraph or, simply, a digraph. 
 If $M$ is an operator associated to a finite graph $G$ of order $n$ we denote its spectrum by 
 $\sigma(M):=\{\lambda_1,\dots,\lambda_n\}$ taking multiplicities into account. 
 If $\sigma(M)\in \R$, e.g. when $M$ is selfadjoint (or symmetric matrix), we write the multiset of its eigenvalues in increasing order, i.e., $\lambda_1\leq \dots\leq \lambda_n$, and taking multiplicities into account. 

\paragraph{\bf Acknowledgements:} It is a pleasure to thank Carlos Marijuan for many  conversations on digraphs and an anonymous referee for useful suggestions on the first version of the article.

\section{Weighted digraphs and components of the Laplacian}
\label{sec:graph.theory}

In this section, we introduce, to fix our notation, the main definitions and results 
for weighted multidigraphs (or digraphs for short) and standard operators on them, like the adjacency and the Laplacian operators. These operators can be naturally obtained from the evaluation maps, the discrete gradient, and their adjoints. Standard references are \cite{chung97,bondy-murty:08}.

\subsection{Digraphs and weights}
\label{subsec:graph_theo}
We begin introducing formally the type of graphs we consider in this article.
\begin{definition} 
\label{def:disc_graphs}
A \textit{(discrete) multidigraph} or, simply, a digraph $G$ is given by the tuple $(V, A, \partial)$, where $V$ and $A$ are finite disjoint sets of vertices and arcs (multiple arcs and loops allowed), respectively. The function $\partial\colon A \longrightarrow V \times V$ is the \textit{incidence map}: $\partial a=\left(\partial_{-} a, \partial_{+} a\right)$ is the pair of initial and terminal vertices of $a \in A$. In particular, $\partial_{\pm} a$ fixes an orientation of $a$. For a vertex $v \in V$, we define by
$$
A_{v}^{\pm}:=\left\{a \in A \mid \partial_{\pm} a=v\right\}
$$
the set of arcs ending (also denoted by $(\text{in}/+)$) and starting 
(denoted by $(\text{out}/-)$) at $v$, respectively.\footnote{This notation is not uniform in the literature. In fact, in \cite{bondy-murty:08} the index $^+$ is used to denote outgoing arcs.} 
Moreover, we refer to the set of all arcs adjacent to \textit{v} as $A_{v}=A_{v}^{+} \sqcup A_{v}^{-}$
and write the degree of the vertex $v\in V$ as
$$
\operatorname{deg}_{G} v:=\operatorname{deg} v=|A_{v}|\;.
$$
Similarly, we denote by $\operatorname{deg}^{+/-}v=\left|A_{v}^{+/-}\right|$ the in/out degree, i.e., the set of arcs ending/starting in $v$. For subsets $B$ and $C$ of $V$ we write
$$
A^{+/-}(B, C):=\left\{a \in A \mid \partial_{-/+} a \in B, \partial_{+/-} a \in C\right\}
$$ 
for the set of arcs from $B$ to $C$ $(+)$ and from $C$ to $B$ $(-)$. In particular, $A_{v}^{\pm}=A^{\pm}(V, v)$ and $A^{+}(B, C)=A^{-}(C, B)$, where, for simplicity, $A^{\pm}(V, v)$ stands for $A^{\pm}(V,\{v\})$. If both orientations are taken into account we avoid any index and simply write
$$
A(B, C)=A^{+}(B, C) \cup A^{-}(B, C).
$$
We denote by $N_{v}:=\{u\in V\mid A(u,v) \subset A\}$ the neighborhood of $v\in V$. If we need to specify the orientation, we write $N_{v}^+$ and $N_{v}^-$, for the ingoing and outgoing neighborhoods, respectively.
\end{definition}
    
The set of non-isolated vertices of digraph $G$ may be labeled as $V_{+/-}$ denoting the  \textit{in}/\textit{out} vertices, i.e., vertices with arcs ending in or starting from them, i.e. 
\begin{equation}\label{in-out-vertices}
V_{+/-}:= \{v \in V \mid \exists \, a \in A \text{ with } v=\partial_{+/-}a \}= \{\partial_{+/-}a\mid a \in A\}\;.
\end{equation}

Later we will need to specify the following substructures of a digraph: a \textit{subdigraph} $G^{\prime}=\left(V^{\prime}, A ^{\prime}, \partial^{\prime}\right)$ of $G$ is given by subsets $V^{\prime} \subset V$ and $A^{\prime} \subset A$ with $A^{\prime} \subset$ $A^{G}\left(V^{\prime}, V^{\prime}\right)$ (i.e., $\partial a \in V^{\prime} \times V^{\prime}$ for all $\left.a \in A^{\prime}\right)$ and $\partial^{\prime}=\partial\! \restriction_{A\prime}$. Furthermore, an \textit{induced subdigraph} is a subdigraph with $A^{G}\left(V^{\prime}, V^{\prime}\right)=A^{\prime}$ (i.e., if $v_{1}, v_{2} \in V^{\prime}$, then also all arcs in the original digraph $G$ have to be in 
$G^{\prime}$). Important subdigraphs in this article will be the sources and sinks defined in Definition~\ref{def:sink_source}.

\begin{example} \label{exa:graphs_1}
To fix notation we consider the following representation of a digraph as given in Fig.~\ref{fig:e_2_2}. If two vertices are joined by an arc we use an arrow to indicate its incidence. If two vertices are joined by two arcs with opposite directions we represent it simply with an edge (see the edge joining the vertices $v_2,v_3$). 
	\begin{figure}[h]
		\centering
				\begin{tikzpicture}[baseline,vertex/.style={circle,draw,fill, thick, inner sep=0pt,minimum size=2mm},scale=0.9]
					\node (1) at (1,1) [vertex,label=above:$v_1$] {};
					\node (2) at (2.2,1) [vertex,label=above:$v_2$] {};
					\node (3) at (3.4,1) [vertex,label=above:$v_3$] {};
					\node (4) at (5,2) [vertex,label=above:$v_4$] {};
					\node (5) at (5,0) [vertex,label=above:$v_5$] {};	
					\node[draw=none] (6) at (3,-0.1) {};	
					\node[draw=none] (6) at (3,3) {};
					\draw[-{Latex[ width=1.4mm]}](1) edge node[above] {$a_1$} (2);
					\draw (2) edge node[above] {$_{a_2,a_3}$} (3);
					\draw[-{Latex[ width=1.4mm]}](3) edge node[above] {$a_4$} (4);			
					\draw[-{Latex[ width=1.4mm]}](3) edge node[above] {$a_5$} (5);
					\draw[-{Latex[ width=1.4mm]}](2) to [out=330,in=270,looseness=20] node[below] {$a_6$} (2) ;
				\end{tikzpicture}
		\caption{A representation of a digraph of order $5$.}
		\label{fig:e_2_2}
	\end{figure}
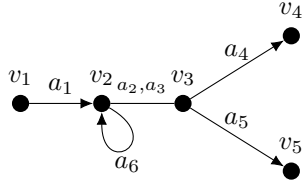
\end{example}

\begin{definition}\label{def:weight_func}  (\textit{Weight functions}) A \textit{weighted (discrete) digraph} $(G, m)$ is a pair given by a digraph $G=(V, A, \partial)$ and two functions (called vertex and arc weights), which are denoted by the same symbol
$$
 m\colon V \longrightarrow(0, \infty) \quad \text { and } \quad m\colon A \longrightarrow(0, \infty)\;.
$$
Note that weights can be thought as positive discrete measures on vertices and arcs. In particular, we denote for subsets $U\subset V$ and $B\subset A$
\[
 m(U):=\sum_{v\in U} m(u)\quadtext{and}  m(B):=\sum_{a\in B} m(a)\;.
\]
The in/out relative weights $\mathrm{rel}_{m}^{+/-} \colon V \longrightarrow(0, \infty)$ 
and the undirected relative weight $\mathrm{rel}_{m} \colon V \longrightarrow(0, \infty)$
are defined respectively as
\begin{equation}\label{eq:rel}
\operatorname{rel}_{m}^\pm(v):=\frac{1}{m(v)} \sum_{a \in A^\pm_{v}} m(a)=\frac{m\left(A^\pm_{v}\right)}{m(v)}, \quadtext{and} 
\operatorname{rel}_{m}(v):= (\operatorname{rel}_{m}^+  + \operatorname{rel}_{m}^-)(v).
\end{equation}
\end{definition}

\begin{example}\label{ex:comb_w} The weight $m=1$ (i.e., $m(v)=1$ and $m(a)=1$ for all $v \in V$ and $a \in A$) is called \textit{combinatorial weight}. For the weighted digraph with combinatorial weight, we also write $G^{\mathrm{comb}}=(G, 1)$. In this case, the relative weight is just the unoriented degree, i.e., $\operatorname{rel}_{1}(v)=\operatorname{deg} v$ for all $v \in V$.
\end{example}

\subsection{First and second order operators on digraphs}
\label{subsec:graph_theo_operators}

We consider in this subsection various types of second order operators which may be constructed from first order evaluation maps. We begin introducing the following natural Hilbert spaces on weighted graphs. These operator definitions will later be translated into matrix representations, yielding each of the well-known adjacency, Laplacian, incidence and degree matrices in graph theory.

\begin{definition}\label{def:hil_spaces} Let $(G, m)$ be a weighted digraph with $G=(V, A, \partial)$. Denote the associated Hilbert spaces
	$$
	\begin{aligned}
		&\ell_{2}(V, m):=\{\varphi\colon V \longrightarrow \mathbb{C} \}\quadtext{and} \ell_{2}(A, m):=\{\eta\colon A \longrightarrow \mathbb{C}\}.
	\end{aligned}
	$$
	with inner products and norms defined as usual in terms of the weights by
	$$
	\langle\varphi, \psi\rangle_{\ell_{2}(V, m)}=\sum_{v \in V} \varphi(v) \overline{\psi(v)} m(v) \text { and }\langle\eta, \alpha\rangle_{\ell_{2}(A, m)}=\sum_{a \in A} \eta(a) \overline{\alpha(a)} m(a) .
	$$
	$$
	\|\varphi\|_{\ell_{2}(V, m)}^{2}:=\sum_{v \in V}|\varphi(v)|^{2} m(v) \quad \text { and } \quad\|\eta\|_{\ell_{2}(A, m)}^{2}:=\sum_{a \in A}|\eta(a)|^{2} m(a) .
	$$
\end{definition}

Next we introduce the canonical evaluation maps which will, in several natural combinations, lead to the corresponding second order operators.
\begin{definition}(Evaluation maps and discrete gradient) \label{def:eva_der}
Let $(G, m)$ be a weighted digraph with $G=(V, A, \partial)$. 
For $\varphi \in \ell_{2}(V, m)$ the \textit{in/out evaluation maps} $d^\pm \varphi\colon A \longrightarrow \mathbb{C}$ 
$$
d^\pm\colon\ell_{2}(V, m)\to \ell_{2}(A, m), \,\, (d^\pm \varphi)(a)=\pm\varphi\left(\partial_{\pm} a\right) ,\quad  a \in A.
$$
Moreover, the evaluation maps correspond to the natural decomposition of the well-known \textit{discrete derivative} $d$ which can be interpreted as a discrete gradient. We refer to 
Subsection~\ref{sec:flows} for a relation between $d$ and circulations of the digraph.
$$
d\colon \ell_{2}(V, m)\to  \ell_{2}(A, m)\quadtext{with}
(d \varphi)(a):= ((d^+ + d^-)\varphi )(a) = \varphi\left(\partial_{+} a\right)-\varphi\left(\partial_{-} a\right) ,\quad  a \in A.
$$
\end{definition}

A straightforward application of the handshaking lemma gives the following expression for the corresponding adjoint operators
$(d^{\pm})^{*}\colon \ell_{2}(A, m) \longrightarrow \ell_{2}(V, m)$ :
\begin{equation}\label{eq:adjoint-d}
	\left((d^{\pm})^{*}\eta\right)(v)=\pm\frac{1}{m(v)} \sum_{a \in A_{v}^{\pm}}m(a) \eta(a)\; .
\end{equation}    

From these formulas one recovers, by linearity, the usual expressions of the adjoint of the discrete derivative $d^{*}\colon \ell_{2}(A, m) \rightarrow \ell_{2}(V, m)$ (see, e.g., \cite[Section~2]{lledo-post:08}) which has a nice interpretation as a (weighted) discrete divergence:
\begin{equation}\label{eq:div}
	\left(d^{*} \eta\right)(v)=\frac{1}{m(v)} \sum_{a \in A_{v}} \vec{\eta}_{a}(v) m(a), \quad \text { where } \quad \vec{\eta}_{a}(v)
	                                 := \begin{cases}
	                                     \eta(a), & v=\partial_{+} a, \\ 
 									-\eta(a), & v=\partial_{-} a\;.
									\end{cases}
\end{equation}

\begin{example}\label{ex:deg_op_comb} 
The simplest second order (discrete) operators that can be obtained from the first order evaluation maps are the in/out degree operators on a weighted graph $(G,m)$. In fact, a straightforward computation gives for $\varphi \in \ell_{2}(V, m)$ the following multiplication operators
$$
		\mathcal{D}^{\pm} := (d^\pm)^{*}d^\pm \colon \ell_{2}(V, m) \longrightarrow \ell_{2}(V, m) 
		\quadtext{with}
		\left(\mathcal{D}^\pm \varphi \right)(v)= \operatorname{rel}_{m}^\pm(v) \varphi(v)\;.
$$
The unoriented degree operator is defined as $\mathcal{D}\equiv \mathcal{D}_{(G,m)} := \mathcal{D}^+ + \mathcal{D}^-$. Note that for combinatorial weights, the expression reduces to the usual multiplication operator with the vertex degree, i.e., $(\mathcal{D}_{\mathrm{comb}} \, \varphi)(v)= \operatorname{deg}(v) \varphi(v)$, $v\in V$ (recall Example~\ref{ex:comb_w}).
\end{example}

\begin{definition}\label{def:adj_op}(Adjacency operators and its directed components) 
The in/out-adjency operator $\mathcal{A}^{+/-}$  on a weighted digraph $(G,m)$ is defined by 
		$$
		\mathcal{A}^{\pm}  := -(d^\pm)^{*}d^\mp\colon \ell_{2}(V, m) \longrightarrow \ell_{2}(V, m) \;.
		$$
		Its action on functions $\varphi \in \ell_{2}(V, m)$ is easily seen to be
		$$
			\left(\mathcal{A}^\pm\varphi\right)(v)=\frac{ 1}{m(v)} \sum_{a \in A_{v}^\pm}\varphi(\partial_{\mp}a) m(a).\\
		$$ 
		Note that the operators $\mathcal{A}^\pm$ are, in general, non-selfadjoint, while the usual unoriented adjacency operator $\mathcal{A}$ on $G$, defined in terms of its in/out components as $\mathcal{A}:= \mathcal{A}^+ + \mathcal{A}^-$, is selfadjoint. 
\end{definition}

Finally, we introduce the directed components of the Laplacian, which mix the discrete derivative with the corresponding evaluation maps.
\begin{definition}\label{def:lap_op} (The Laplacian and its directed components) The in/out-Laplace operators 
$\mathcal{L}^{+/-}$ on a weighted digraph $(G,m)$ are defined by 
$$
\mathcal{L}^{\pm} :=(d^\pm)^{*} d\colon \ell_{2}(V, m) \longrightarrow \ell_{2}(V, m) \;.
$$
The unoriented Laplacian $\mathcal{L}$ on $G$ is defined in terms of its components as 
$\mathcal{L} := \mathcal{L}^+ + \mathcal{L}^-$ which is selfadjoint while its in/out components need not be selfadjoint.
\end{definition}

We make contact with the usual expression of the weighted Laplacian (see, e.g., 
\cite[p.~2]{athansiadis:96} in the case of combinatorial weights).
Since these weighted Laplacians are central in this article, we include a proof of the following statement for completeness. 

\begin{proposition}\label{prop:lap_d_exp}
Let $\varphi \in \ell_2(V,m)$, then
$$
\left(\mathcal{L}^\pm \varphi\right)(v)=\left((\mathcal{D}^\pm-\mathcal{A}^\pm)\varphi\right)(v)
=\frac{1}{m(v)} \sum_{a \in A_{v}^\pm}\left(\varphi(v)-\varphi\left(v_{a}\right)\right) m(a)=\operatorname{rel}^\pm_{m}(v) \varphi(v)-\frac{1}{m(v)} \sum_{a \in A^\pm_{v}} \varphi\left(v_{a}\right) m(a).
$$
Moreover, $0$ is an eigenvalue of $\mathcal{L}^\pm$ with (nonzero) constant eigenfunction.
\end{proposition}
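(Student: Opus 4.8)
The plan is to verify the three asserted equalities in turn by direct computation from the definitions, and then to read off the eigenvalue statement.

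First I would establish the operator identity $\mathcal{L}^\pm = \mathcal{D}^\pm - \mathcal{A}^\pm$ purely algebraically, with no reference to the pointwise formulas. Since $d = d^+ + d^-$ by Definition~\ref{def:eva_der}, expanding the definition $\mathcal{L}^\pm = (d^\pm)^* d$ from Definition~\ref{def:lap_op} gives $\mathcal{L}^\pm = (d^\pm)^* d^\pm + (d^\pm)^* d^\mp$. The first summand is exactly $\mathcal{D}^\pm$ (Example~\ref{ex:deg_op_comb}), and since $\mathcal{A}^\pm = -(d^\pm)^* d^\mp$ (Definition~\ref{def:adj_op}) the second summand is $-\mathcal{A}^\pm$, which yields the first equality.

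Next I would compute the pointwise action. Applying the adjoint formula \eqref{eq:adjoint-d} to $\eta = d\varphi$ gives
$$(\mathcal{L}^\pm \varphi)(v) = \pm \frac{1}{m(v)} \sum_{a \in A_v^\pm} m(a)\, (d\varphi)(a) = \pm \frac{1}{m(v)} \sum_{a \in A_v^\pm} m(a)\, \bigl(\varphi(\partial_{+} a) - \varphi(\partial_{-} a)\bigr).$$
The key bookkeeping step is the sign: for $a \in A_v^+$ one has $\partial_{+} a = v$, so the bracket equals $\varphi(v) - \varphi(\partial_{-} a)$ and the outer $+$ leaves it unchanged; for $a \in A_v^-$ one has $\partial_{-} a = v$, so the bracket equals $\varphi(\partial_{+} a) - \varphi(v)$ and the outer $-$ flips it to $\varphi(v) - \varphi(\partial_{+} a)$. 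Writing $v_a$ for the endpoint of $a$ opposite to $v$ (that is, $\partial_{\mp} a$ for $a \in A_v^\pm$), both cases collapse to the uniform expression
$$(\mathcal{L}^\pm \varphi)(v) = \frac{1}{m(v)} \sum_{a \in A_v^\pm} m(a)\, \bigl(\varphi(v) - \varphi(v_a)\bigr),$$
which is the middle equality. Splitting this sum and using $\sum_{a \in A_v^\pm} m(a) = m(A_v^\pm) = m(v)\operatorname{rel}_m^\pm(v)$ from \eqref{eq:rel} produces the final expression $\operatorname{rel}_m^\pm(v)\varphi(v) - \frac{1}{m(v)}\sum_{a \in A_v^\pm}\varphi(v_a) m(a)$.

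Finally, for the eigenvalue statement I would simply observe that any constant function $\varphi \equiv c$ satisfies $(d\varphi)(a) = \varphi(\partial_{+} a) - \varphi(\partial_{-} a) = 0$ for every arc, so $\mathcal{L}^\pm \varphi = (d^\pm)^* d\varphi = 0$; taking $c \neq 0$ exhibits a nonzero eigenfunction for the eigenvalue $0$. I expect no serious obstacle here, since the whole argument is a direct unwinding of definitions. The only point demanding genuine care is the sign reconciliation described above, together with checking that loops (where $\partial_{+} a = \partial_{-} a = v$, hence $v_a = v$ and the contribution vanishes) are correctly absorbed into the uniform formula.
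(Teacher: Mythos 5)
Your proposal is correct and follows essentially the same route as the paper's proof: the algebraic expansion $(d^\pm)^*d = (d^\pm)^*d^\pm + (d^\pm)^*d^\mp = \mathcal{D}^\pm - \mathcal{A}^\pm$, the pointwise computation via the adjoint formula \eqref{eq:adjoint-d} applied to $d\varphi$ with the same sign reconciliation, and the observation $\ker d \subset \ker \mathcal{L}^\pm$ for the eigenvalue claim. Your explicit treatment of the two cases $a \in A_v^+$ versus $a \in A_v^-$ and of loops is slightly more detailed than the paper's, but the argument is the same.
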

\begin{proof}
First note that by the definition of the involved operators in terms of first order derivatives, we have
\[
 \mathcal{L}^\pm
    = \left(d^\pm\right)^*d
    = \left(d^\pm\right)^*d^+ +\left(d^\pm\right)^*d^-
    =\mathcal{D}^\pm-\mathcal{A}^\pm\;.
\]
Moreover, the explicit expression can be obtained via
	$$
	\begin{aligned}
		(\mathcal{L}^\pm  \varphi)(v)
		&=\frac{\pm1}{m(v)} \sum_{a \in A_{v}^\pm}({d \varphi})(a) m(a) 
		   = \frac{\pm1}{m(v)} \sum_{a \in A_{v}^\pm}( \varphi(\partial_+ a) - \varphi(\partial_- a) ) m(a)\\
		&= \frac{1}{m(v)} \sum_{a \in A_{v}^\pm}( \varphi(v) - \varphi(v_a) ) m(a)
		   =\operatorname{rel}^\pm_{m}(v) \varphi(v)-\frac{1}{m(v)} \sum_{a \in A^\pm_{v}} \varphi\left(v_{a}\right) m(a)\;,
	\end{aligned}
	$$
	where we use the notation $v_a$ to identify the vertex opposed to $v\in V$ along the arc $a\in A$. Furthermore, note that any constant function is in the kernel of the discrete derivative $d$. Therefore, by definition of the in/out Laplacians, $0$ is an eigenvalue of $\mathcal{L}^\pm$ because $\ker d\subset \ker \mathcal{L}^\pm$.
\end{proof}

An immediate consequence of the preceding result is the relation to the usual weighted Laplacian. In fact, the in/out Laplacians can be understood as natural components of $\mathcal{L}$.
\begin{corollary}\label{cor:lap_exp}Let $(G, m)$ be a weighted digraph $G=(V, A, \partial)$ and $\mathcal{L} = \mathcal{L}^+ + \mathcal{L}^-$ the weighted Laplacian. For $\varphi \in \ell_2(V,m)$ we have
	$$
    (\mathcal{L} \varphi)(v)=((\mathcal{L}^+  + \mathcal{L}^- )\varphi)(v)
		=\frac{1}{m(v)} \sum_{a \in A_{v}}\left(\varphi(v)-\varphi\left(v_{a}\right)\right) m(a)
		=\operatorname{rel}_{m}(v) \varphi(v)-\frac{1}{m(v)} \sum_{a \in A_{v}} \varphi\left(v_{a}\right) m(a)\;.
	$$
\end{corollary}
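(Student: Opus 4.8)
The plan is to obtain the identity directly from Proposition~\ref{prop:lap_d_exp} together with the defining decomposition $\mathcal{L} = \mathcal{L}^+ + \mathcal{L}^-$ of Definition~\ref{def:lap_op}; no new computation beyond adding two already-established formulas is required. First I would write down, for each sign $\pm$ separately, the pointwise expression furnished by the Proposition, namely $(\mathcal{L}^\pm\varphi)(v) = \operatorname{rel}_m^\pm(v)\,\varphi(v) - \frac{1}{m(v)}\sum_{a\in A_v^\pm}\varphi(v_a)\,m(a)$, and then sum the $+$ and $-$ versions evaluated at a fixed vertex $v\in V$.

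The only structural facts entering the sum are the two additivity relations built into the setup: the disjoint decomposition $A_v = A_v^+ \sqcup A_v^-$ of the incident arcs from Definition~\ref{def:disc_graphs}, and the corresponding splitting $\operatorname{rel}_m = \operatorname{rel}_m^+ + \operatorname{rel}_m^-$ of the relative weight recorded in \eqref{eq:rel}. Using the former, the two sums $\sum_{a\in A_v^+}\varphi(v_a)\,m(a)$ and $\sum_{a\in A_v^-}\varphi(v_a)\,m(a)$ merge into the single sum $\sum_{a\in A_v}\varphi(v_a)\,m(a)$; using the latter, the coefficients $\operatorname{rel}_m^+(v)$ and $\operatorname{rel}_m^-(v)$ of $\varphi(v)$ combine to $\operatorname{rel}_m(v)$. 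This already yields the rightmost expression in the statement, and recollecting the $\varphi(v)$ and $\varphi(v_a)$ terms into $\varphi(v)-\varphi(v_a)$ (equivalently, using $\operatorname{rel}_m(v) = \frac{1}{m(v)}\sum_{a\in A_v}m(a)$) produces the middle one.

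I do not expect any genuine obstacle; the content is entirely bookkeeping once the two additivity relations are invoked. The one point worth a line of care is the treatment of loops, for which $v_a = v$: such arcs are incident to $v$ through both their initial and terminal endpoint, so they are counted in $A_v$ with the multiplicity needed to keep $A_v = A_v^+ \sqcup A_v^-$ and $\operatorname{rel}_m = \operatorname{rel}_m^+ + \operatorname{rel}_m^-$ mutually consistent; since moreover a loop contributes the vanishing summand $(\varphi(v)-\varphi(v))\,m(a)=0$, it leaves the Laplacian unaffected and the additivity passes through unchanged.
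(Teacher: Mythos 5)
Your proof is correct and matches the paper's approach exactly: the paper presents this corollary as an immediate consequence of Proposition~\ref{prop:lap_d_exp}, obtained precisely by summing the $\mathcal{L}^+$ and $\mathcal{L}^-$ expressions and invoking $A_v = A_v^+ \sqcup A_v^-$ together with $\operatorname{rel}_m = \operatorname{rel}_m^+ + \operatorname{rel}_m^-$. Your extra remark on loops (counted once in each of $A_v^{\pm}$, contributing a vanishing summand) is a careful touch consistent with the paper's conventions.
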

Note that if $G$ is unoriented and the weights of the corresponding edge/arcs weights satisfy $m(a)=m(\overline{a})=m(e)/2$ then $\mathcal{L}^+=\mathcal{L}^-=\mathcal{L}/2$.

\begin{remark}\label{rem:ONB}
For concrete calculations, it is useful to represent operators as matrices. For this, given a weighted digraph $(G,m)$ with 
$G=(V, A, \partial)$ fix the following canonical orthonormal basis (ONB) 
$\left\{\delta_{v} \mid v \in V\right\}$ of $\ell_{2}(V, m)$ 
(respectively $\left\{\delta_{a} \mid a \in A\right\}$ of $\ell_{2}(A, m)$) where
$$
\delta_{v}(w):= 
\begin{cases}
1 / m(v)^{1 / 2}, & v=w, \\ 0, & v \neq w
\end{cases}
\quad \text{and} \quad 
\delta_{a}(a'):= 
\begin{cases}
1 / m(a)^{1 / 2}, & a'=a, \\ 0, & a' \neq a\;.
\end{cases}  
$$
Note that $\left\langle\delta_{v}, \delta_{w}\right\rangle_{\ell_{2}(V, m)}=m(v) /\left(m(v)^{1 / 2} m(v)^{1 / 2}\right)=1$ 
for $v=w$ and $0$ otherwise and similarly for $\{\delta_{a}\mid a\in A\}$.

For this basis, the adjacency and Laplacian matrices are given by
\begin{enumerate}
    \item \textit{In, out and unoriented incidence matrices:}
	$$
\left(\mathcal{B}^\pm\right)_{aw}:=\langle  \delta_{v}, d^\pm\delta_{w}\rangle 
		=  \begin{cases}\pm\sqrt{\frac{m(a)}{m(w)}}, & \text{ if } \partial_{\pm} a= w, \\
			0, & \text{other}.\end{cases} \text{, }
		\left(\mathcal{B}\right)_{aw}:=\left\langle \delta_{v}, d  \delta_{w}\right\rangle 
		= \begin{cases}
            +\sqrt{\frac{m(a)}{m(w)}}, & \text{ if } \partial_{+} a= w, \\
            -\sqrt{\frac{m(a)}{m(w)}}, & \text{ if } \partial_{-} a= w, \\
			0, & \text{other}.
        \end{cases}
	$$
	\item \textit{In, out and unoriented adjacency matrices:}
	$$
\left(\mathcal{A}^\pm\right)_{v w}:=\langle  \delta_{v}, \mathcal{A}^\pm\delta_{w}\rangle 
		= \begin{cases}\frac{ m(A^\pm(v, v))}{m(v)}, & v=w, \\
			\frac{m(A^\pm(w, v))}{\sqrt{m(v) m(w)}}, & v \neq w.
          \end{cases} \text{, }
		\left(\mathcal{A}\right)_{v w}:=\left\langle \delta_{v}, \mathcal{A}  \delta_{w}\right\rangle 
		= \begin{cases}\frac{2 m(A(v, v))}{m(v)},  &  v=w, \\
			\frac{m(A(w, v))}{\sqrt{m(v) m(w)}}, & v \neq w,
          \end{cases}
	$$
	\textit{where we set} \[ 
	              m\left(A^{\prime}\right):=\sum_{a \in A^{\prime}} m(a)\quadtext{(loops counted once; see Definition~\ref{def:weight_func})}.
	             \]

	\item \textit{In, out and unoriented Laplacian matrices:}
	$$
		\left(\mathcal{L}^\pm\right)_{v w}:=\langle \delta_{v}, \mathcal{L}^\pm \delta_{w}\rangle 
		= \begin{cases}
		  \frac{m\left(A_{v}^\pm\right)-m(A^\pm(v, v))}{m(v)} & v=w, \\
		  -\frac{m(A^\pm(w, v))}{\sqrt{m(v) m(w)}}, & v \neq w.
		  \end{cases} \text{, }
		\left(\mathcal{L}\right)_{v w}:=\left\langle \delta_{v}, \mathcal{L} \delta_{w}\right\rangle 
		   =\begin{cases}\frac{m\left(A_{v}\right)-2 m(A(v, v))}{m(v)}, & v=w, \\
			-\frac{m(A(w, v))}{\sqrt{m(v) m(w)}}, & v \neq w \;.
          \end{cases}
	$$
\end{enumerate}
\end{remark}

\begin{example}\label{exa:w_digrph}
We illustrate the different components of the adjacency and Laplacian matrices in the example of the digraph $(G,m)$ in Fig.~\ref{fig:exa:w_digrph}, with weights denoted by numbers over arcs and vertices that also number them. The corresponding matrices are
	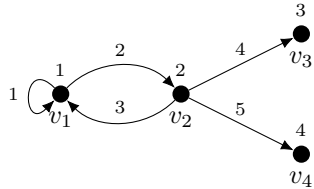
\begin{figure}[h]
	    \begin{tikzpicture}[baseline,vertex/.style={circle,draw,fill, thick, inner sep=0pt,minimum size=2mm},scale=0.8]
					\node (1) at (1,0) [vertex,label=above:$_1$, label=below:$v_1$] {};
					\node (2) at (3,0) [vertex,label=above:$_2$, label=below:$v_2$] {};
					\node (3) at (5,1) [vertex,label=above:$_3$, label=below:$v_3$] {};
					\node (4) at (5,-1) [vertex,label=above:$_4$, label=below:$v_4$] {};
					\draw[-{Latex[ width=1.4mm]}](1) to [out=45,in=135,looseness=1] node[above] {$_2$} (2) ;
					\draw[-{Latex[ width=1.4mm]}](2) to [out=225,in=315,looseness=1] node[above] {$_{3}$} (1) ;
					\draw[-{Latex[ width=1.4mm]}](2) edge node[above] {$_4$} (3);
					\draw[-{Latex[ width=1.4mm]}](2) edge node[above] {$_5$} (4);
					\draw[-{Latex[ width=1.4mm]}](1) to [out=135,in=225,looseness=10] node[left] {$_1$} (1) ;
		\end{tikzpicture}
		\caption{Weighted digraph.} 
		\label{fig:exa:w_digrph}
	\end{figure}
	
	\begin{tabular}{l l l l}
		    {In adjacency matrix} &{$\mathcal{A^+} = 
				\begin{bmatrix*}
					1 & \frac{3}{\sqrt{2}} & 0 & 0 \\[1mm]
					\sqrt{2} & 0 & 0 & 0\\[1mm]
					0 & \frac{4}{\sqrt{6}} & 0 & 0 \\[1mm]
					0 & \frac{5}{2\sqrt{2}} & 0 & 0 \\[1mm]
				\end{bmatrix*} $,}
        & Out adjacency  & \!\!\!\!\!matrix $\mathcal{A}^- = (\mathcal{A}^+)^*  $. \\[35pt]
{In Laplacian} &
		{$\mathcal{L^+} = 
			\begin{bmatrix*}[r]
					2 & \shortminus\frac{3}{\sqrt{2}} & 0 & 0 \\[1mm]
					\shortminus\sqrt{2} & 1 & 0 & 0 \\[1mm]
					0 & \shortminus\frac{4}{\sqrt{6}} & \frac{4}{3} & 0  \\[1mm]
					0 & \shortminus\frac{5}{2\sqrt{2}} & 0 & \frac{5}{4} 
				\end{bmatrix*}$,}
& {Out Laplacian} &
{$\mathcal{L^-} = 
\begin{bmatrix*}[r]
 1 & \shortminus \sqrt{2} & 0 & 0 \\[1mm]
 \shortminus\frac{\sqrt{3}}{\sqrt{2}} & \frac{3}{2} 
& \shortminus \frac{\sqrt{4}}{\sqrt{6}} & \shortminus \frac{\sqrt{5}}{\sqrt{8}} \\[1mm]
 0 & 0 & 0 & 0  \\[1mm]
 0 & 0 & 0 & 0 
\end{bmatrix*}$.}
				\\[35pt]
{Adjacency matrix} &
		{$\mathcal{A}\,\,\, =  
				\begin{bmatrix*}[r]
					2 & \frac{5}{\sqrt{2}} & 0 & 0 \\[1mm]
					\frac{5}{\sqrt{2}} & 0 & \frac{4}{\sqrt{6}} & \frac{5}{2\sqrt{2}} \\[1mm]
					0 & \frac{4}{\sqrt{6}} & 0 & 0  \\[1mm]
					0 & \frac{5}{2\sqrt{2}} & 0 & 0 
				\end{bmatrix*}  $,}& {Laplacian}&
		{$\mathcal{L}\,\, = 
			\begin{bmatrix*}[r]
					5 & \shortminus\frac{5}{\sqrt{2}} & 0 & 0 \\[1mm]
					\shortminus\frac{5}{\sqrt{2} }& 7 & \shortminus\frac{4}{\sqrt{6}} & \shortminus\frac{5}{2\sqrt{2}} \\[1mm]
					0 & \shortminus\frac{4}{\sqrt{6}} & \frac{4}{3} & 0 \\[1mm]
					0 & \shortminus\frac{5}{2\sqrt{2}} & 0 & \frac{5}{4}  
				\end{bmatrix*}   $.}\\
		\end{tabular}
	
\end{example}

\begin{remark}
In \cite{mulas:24} the authors define non-backtracking (di)graphs and the corresponding (non-symmetric) Laplacians
$\cL_\mathrm{NB}$. 
These matrices show nice spectral properties (see also \cite{mulas:23}). For example, Table~2 in \cite{mulas:23}
suggests that these operators have a stronger spectral rigidity in relation to other notions of Laplacian.
Exploring a reformulation of non-backtracking Laplacians in the geometrical framework introduced in this article could yield insightful results. In particular, one could study if the method of construction of infinite families of isospectral graphs labeled by partitions of natural numbers also applies in the non-backtracking framework (see \cite{fclp:23,fclp:22}).
\end{remark}

\section{Sources, sinks and the spectrum of the in/out Laplacians} 
\label{sec:connectedness}

As a motivation of the main result in this section consider the out Laplacian 
$\mathcal{L^-}$ of the digraph represented in Fig.~\ref{fig:exa:w_digrph} (Example~\ref{exa:w_digrph}). Its eigenvalues (taking multiplicities into account) 
are given by 
\[ 
   \sigma\left( \mathcal{L^-}\right)=\{-0.5,0,0,3\}
\]
and note that the multiplicity of the zero eigenvalue coincides with the number of sinks (in this example, vertices with zero out-degree) labeled by $v_3$ and $v_4$. We will establish in full generality that the number of sources/sinks of a digraph with arbitrary weights is characterized by the multiplicity of $0$ in the spectrum of  $\mathcal{L^{+/-}}$. Similar results establishing the connection between the multiplicity of $0$ and related notions like, for example, the \emph{reaches} of a graph or the minimal number of directed trees needed to span the graph can be found in 
\cite{veerman:06,bauer:12}. 

We begin introducing standard notions of connectivity in digraphs 
(see \cite{b-brualdi:92,bondy-murty:08,bang-jensen-gutin:18}). We illustrate these concepts in Examples~\ref{exa:chains} and \ref{exa:sss}.
\begin{definition}\label{def:dir_path}(Directed paths) 
A \textit{directed path} $P$ of length \textit{p} in a digraph $G$ is a sequence $v_0,a_1,v_1,a_2,...,v_{p-1},a_p$, $v_p$ of vertices $\{v_j\}_{j=0}^{p}$ and arcs $\{a_j\}_{j=1}^{p}$ verifying 
$(\partial_+a_j = v_j \text{ and } \partial_-a_j = v_{j-1})$ such that $v_i\neq v_j$ if $i\neq j$. In this case, it is said that $v_0$ is \textit{directedly connected} with $v_p$, in symbols, $v_0 \rightsquigarrow v_p$ (or $v_p$ is reachable from $v_0$). The digraph $G$ is said to be \textit{directedly connected} or, simply, \textit{$d$-connected} if for any pair $u,v \in V$ there is at least one directed path either $u \rightsquigarrow v$ or $v \rightsquigarrow u$. The graph $G$ is said to be \textit{strongly connected} if for any pair $u,v \in V$ there exist both directed paths $u \rightsquigarrow v$ and $v \rightsquigarrow u$.
Finally, a \textit{strongly connected component} (SCC) of $G$ is a maximal induced subdigraph (cf., Definition~\ref{def:disc_graphs}) which is strongly connected. 
\end{definition}

It follows immediately from the previous definition that a digraph is strongly connected if and only if it only has one SCC. Also, note that any vertex is strongly connected to itself and the SCCs of a digraph give a partition of the vertex set of $G$. Finally, it is also clear that if $G$ is strongly connected then $G$ is also $d$-connected. It is also immediate that the $d$-connected relation $u \rightsquigarrow v$ specifies a preorder on the set of vertices of the graph, i.e., a reflexive and transitive relation. If $u \rightsquigarrow v$ we say $u$ is smaller than $v$. The preorder naturally sugests the notion of {\em chain} and 
{\em maximal chain} within the vertex set that can also be used to characterize $d$-connectedness of a digraph.

\begin{definition}\label{def:prechains}(Chain and maximal chain) Given the preorder relation $\rightsquigarrow $ in 
$ V\times V$, a \textit{chain} is a subset of vertices $V ' \subset V$ verifying that for any $ u,v\in V' $ either
$u \rightsquigarrow v$ or $v \rightsquigarrow u$ (i.e., any pair of vertices in $V'$ is comparable).
Furthermore, a chain $V ' \subset V$ is called a \textit{maximal chain} if there is no other chain $V''\subset V$ such that $V'\varsubsetneq V''$. A \textit{lower/upper bound} of a maximal chain $V' \subset V$ is a vertex $v\in V'$ smaller/greater than any other vertex in $V'$.
\end{definition}

Note that the reachable set $\cR(v)$ of a vertex $v$ as mentioned in \cite{veerman:19} may be an example of a chain in a digraph. Nevertheless, these two notions are different in general. Take, for example, a star graph emitting from the center vertex into the leaves. The (maximal) chains are the pair of vertices consisting of the center vertex and the leaves. On the other hand the reachable set of the center vertex is the whole vertex set of the star graph.

\begin{proposition}\label{prop:d_con_one_mp}
    Let $(G, m)$ be a weighted digraph $G=(V, A, \partial)$. Then $G$ is $d$-connected if and only if there is a unique maximal chain in V.
\end{proposition}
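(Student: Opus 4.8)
The plan is to notice that $d$-connectedness is, essentially by unwinding the definitions, the statement that the whole vertex set $V$ is itself a chain, and then to show that $V$ being a chain is equivalent to uniqueness of the maximal chain. Comparing \Def{dir_path} with \Def{prechains}, the digraph $G$ is $d$-connected exactly when any two vertices $u,v\in V$ are comparable under $\rightsquigarrow$, which is precisely the condition for $V$ to be a chain. Thus the proposition reduces to the equivalence: $V$ is a chain if and only if $V$ admits a unique maximal chain. The engine driving both directions is the elementary fact that, since $V$ is finite, every chain is contained in some maximal chain (one keeps adjoining vertices comparable to all current members until none remain), together with the observation that each singleton $\{v\}$ is a chain by reflexivity of the preorder $\rightsquigarrow$; in particular every vertex lies in at least one maximal chain.

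For the forward implication, suppose $V$ is a chain. Then $V$ cannot be properly contained in any chain, so $V$ is itself a maximal chain. If $M$ is any maximal chain, then $M\subset V$; were $M\neq V$, then choosing $v\in V\setminus M$ the set $M\cup\{v\}$ would again be a chain, since every pair is comparable because $V$ is a chain, contradicting the maximality of $M$. Hence $M=V$, and the maximal chain is unique.

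For the converse, assume there is a unique maximal chain $M$. By the engine above, each $v\in V$ lies in some maximal chain, which by uniqueness must equal $M$, so $v\in M$; therefore $M=V$. Since $M$ is a chain, any two vertices of $V$ are comparable, that is, $G$ is $d$-connected. I do not expect a genuine obstacle here: the content is the translation of $d$-connectedness into the language of the preorder, plus the finiteness fact that chains extend to maximal chains. The only point requiring a little care is that $\rightsquigarrow$ is merely a preorder (comparability, without antisymmetry), but this is harmless since chains are defined through comparability alone and the argument never invokes antisymmetry.
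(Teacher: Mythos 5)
Your proof is correct and follows essentially the same route as the paper's: both directions rest on the translation of $d$-connectedness into ``$V$ is a chain'' and on the finiteness fact that every vertex (or chain) extends to a maximal chain, which by uniqueness must be all of $V$. The only cosmetic difference is that you argue the converse directly ($M=V$) where the paper argues by contradiction, and you spell out the forward direction that the paper dismisses as immediate from the definitions.
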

\begin{proof}
    The implication ($\Longrightarrow$) follows from the Definition~\ref{def:dir_path}. To show the reverse implication
    let $V'$ be the maximal chain in $V$. Choose $u,v \in V$ and assume that there is no directed path joining $u$ with $v$ and $v$ with $u$. Then at least one vertex (say $u$) is not in $V'$. Take now a maximal chain $V''$ containing $u$ which, by construction, is different from $V'$. This contradicts the fact that there is only one maximal chain. Therefore, $V' =V $ and $G$ must be \textit{$d$-connected}.
\end{proof}

\begin{example}\label{exa:chains} Consider the digraph $(G,m)$ given in Fig.~\ref{fig:exa:chains} which has four maximal chains. 
This example shows that the maximal chains do not specify a partition of the vertex set. The lower (respectively upper) bound of these chains determine subsets $F$ (respectively $S$) of vertices where arcs do not point in (respectively point out) from the complement $F^c$ (respectively into the complement $S^c$). Concretely we have two upper and lower bounds given by $F_1=\{v_1\}$, $F_2=\{v_2\}$ and $S_1=\{v_5\}$, $S_2=\{v_6\}$.

\begin{figure}[h]
    \centering
    \begin{tabular}{m{6cm} m{6cm}}
        {\begin{tikzpicture}[baseline,vertex/.style={circle,draw,fill, thick, inner sep=0pt,minimum size=2mm},scale=0.8]
    						\node (1) at (1.5,0) [vertex,label=above:$v_1$] {};
    						\node (2) at (4.5,0) [vertex,label=right:$v_2$] {};
    						\node (3) at (3,1) [vertex,label=135:$v_3$] {};
    						\node (4) at (3,2.5) [vertex,label=225:$v_4$] {};
    						\node (5) at (1.5,3.5) [vertex,label=above:$v_5$] {};
    						\node (6) at (4.5,3.5) [vertex,label=above:$v_6$] {};
    						\draw[-{Latex[ width=1.4mm]}](1) edge node {} (3);
    						\draw[-{Latex[ width=1.4mm]}](2) edge node{} (3);
    						\draw(3) edge (4);
    						\draw[-{Latex[ width=1.4mm]}](4) edge node{} (5);
    						\draw[-{Latex[ width=1.4mm]}](4) edge node{} (6);
    						\draw[-{Latex[ width=1.4mm]}](2) edge node{} (6);
    						\draw[-{Latex[ width=1.4mm]}](4) to [out=125,in=65,looseness=15] node[] {} (4) ;
    	\end{tikzpicture}}&
    	{\vspace{6mm} The four maximal chains are given by
    	\begin{enumerate}
    	    \item $v_1\rightsquigarrow \{v_3,v_4\} \rightsquigarrow v_5$,
    	    \item $v_1\rightsquigarrow \{v_3,v_4\} \rightsquigarrow v_6$,
    	    \item $v_2\rightsquigarrow \{v_3,v_4\} \rightsquigarrow v_5$,
    	    \item $v_2\rightsquigarrow \{v_3,v_4\} \rightsquigarrow v_6$.
    	\end{enumerate}}
    \end{tabular}
    \caption{A digraph with four maximal chains. 
    Lower/upper bounds given by $\{v_1\}$,$\{v_2\}$ and $\{v_5\}$,$\{v_6\}$.}
    \label{fig:exa:chains}
\end{figure}
\end{example}

Motivated by the previous example we introduce the following formal definition source and sink of a digraph which will be related to 
the zero eigenvalue of $\mathcal{L}^-$ and $\mathcal{L}^+$, respectively (see also \cite{bjorner-lovasz:92,veerman:06} for related ideas).

\begin{definition}\label{def:sink_source}(Sink, source and stream) Let $(G, m)$ be a weighted digraph $G=(V, A, \partial)$. A \textit{source} $F$ of $G$ is the set of vertices $F\subset V$ of a SCC of $G$ such that $A^-(F,F^c) = \varnothing$ (no arcs entering $F$). Similarly, a \textit{sink} $S$ of $G$ is the set of vertices $S\subset V$ of a SCC of $G$ such that $A^+(S,S^c) = \varnothing$ (no arcs leaving $S$). Finally, the \textit{stream} of $G$ is the set of vertices $T\subset V$ of $G$ that do not belong to a sink nor a source of $G$.
Sometimes, sinks, sources and streams are referred to a given chain $V'$. In that case, we will use the notation $S_{V'}$, $F_{V'}$ and $T_{V'}$, respectively.
\end{definition}
Note that sources, sinks and streams refer to the set of vertices and not to the corresponing induced subgraphs. Moreover, a connected unoriented graph (i.e., each edge comes with both orientations) has a unique SCC and the vertex set is both a sink and a source. In this case, the stream is empty.

We consider next some straightforward facts related to the notion of maximal chains:

\begin{proposition}\label{prop:ss_mp}
Let $(G, m)$ be a weighted digraph $G=(V, A, \partial)$ and $V'\subset V$ a maximal chain.
\begin{enumerate}
\item The subgraph $G'$ induced by $V'$ (see Definition~\ref{def:disc_graphs}) has a unique sink $S_{V'}$ and a unique source $F_{V'}$.
\item Any vertex of $v' \in S_{V'}$ is an upper bound of $V'$. Conversely, any vertex of $v' \in F_{V'}$ is a lower bound of $V'$.
\end{enumerate}
\end{proposition}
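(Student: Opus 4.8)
The plan is to reduce everything to a single key fact: for a maximal chain $V'$, the reachability relation of the induced subgraph $G'$ agrees with the restriction of $\rightsquigarrow$ to $V'$. Once this is available, both assertions follow from the elementary order theory of the total preorder that $\rightsquigarrow$ induces on $V'$ (any two vertices of a chain being comparable, $\rightsquigarrow$ is a total preorder there).

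\emph{Step 1, the key lemma and main obstacle.} I would first show that for $a,b\in V'$ with $a\rightsquigarrow b$ there is a directed path from $a$ to $b$ all of whose vertices lie in $V'$; that is, $a$ reaches $b$ already inside $G'$. The nontrivial point is that a $G$-path need not a priori stay in $V'$, and this is exactly where maximality enters. The mechanism is: if $w$ is an intermediate vertex of a $G$-path, so $a\rightsquigarrow w\rightsquigarrow b$, and if moreover no vertex of $V'$ lies strictly between $a$ and $b$ in the preorder (i.e.\ every $t\in V'$ with $a\rightsquigarrow t\rightsquigarrow b$ already satisfies $t\rightsquigarrow a$ or $b\rightsquigarrow t$), then $w$ is comparable to every $t\in V'$: since such $t$ is comparable to $a$ and to $b$, the case $t\rightsquigarrow a$ gives $t\rightsquigarrow w$ and the case $b\rightsquigarrow t$ gives $w\rightsquigarrow t$ by transitivity, while the only remaining possibility $a\rightsquigarrow t\rightsquigarrow b$ strictly is excluded by hypothesis. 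Then $V'\cup\{w\}$ is a chain, so by maximality $w\in V'$. I expect this comparability/maximality argument to be the main obstacle.

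\emph{Step 2, globalizing.} To remove the ``no vertex strictly between'' hypothesis I would partition $V'$ into its classes of mutual reachability (the intersections of $V'$ with the SCCs of $G$), which $\rightsquigarrow$ totally orders as $E_1<\dots<E_r$. Applying Step 1 inside a single class, and between two consecutive classes, shows that each $E_i$ is strongly connected inside $G'$ and that every vertex of $E_i$ reaches every vertex of $E_{i+1}$ inside $G'$; concatenating these internal paths yields the key lemma in full. In particular the SCCs of $G'$ are exactly $E_1,\dots,E_r$, and reachability in $G'$ between them is the total order $E_1\to\dots\to E_r$.

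\emph{Step 3, identifying sink and source, and part (ii).} Since $V'$ is finite and totally preordered, the top class $E_r$ (the set of upper bounds) and the bottom class $E_1$ (the set of lower bounds) are nonempty. I would show $E_r=S_{V'}$: by Step 2 it is a single SCC of $G'$, and it has no outgoing arc in $G'$, since an arc $u\to t$ with $u\in E_r$ and $t\in V'\setminus E_r$ would give $u\rightsquigarrow t$, while $t\rightsquigarrow u$ because $u$ is an upper bound, forcing $t\in E_r$, a contradiction. For uniqueness, let $S$ be any sink of $G'$; as $S$ has no outgoing $G'$-arc, the key lemma identifies the set of vertices of $V'$ reachable from $s\in S$ with $S$ itself, and then comparability forces every $u\in V'$ to satisfy $u\rightsquigarrow s$ (if $s\rightsquigarrow u$ then $u\in S$, hence $u\rightsquigarrow s$ as $S$ is a class). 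Thus each $s\in S$ is an upper bound, so $S\subseteq E_r$, and maximality of the SCC $S$ together with the strong $G'$-connectedness of $E_r$ gives $S=E_r$. This proves both the uniqueness in (i) and the statement $S_{V'}=\{\text{upper bounds of }V'\}$ in (ii). The corresponding statements for the source $F_{V'}$ follow by the arc-reversal symmetry, which exchanges sinks with sources and upper with lower bounds while leaving the maximal chain $V'$ unchanged.
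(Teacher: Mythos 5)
Your proof is correct, but it takes a genuinely different --- and in fact more careful --- route than the paper. The paper's proof is a short contradiction argument: two distinct sinks of $G'$ would be disjoint SCCs, and ``by definition of a sink there is no directed path connecting $s_1\in S_1'$ and $s_2\in S_2'$ or vice versa, contradicting the fact that $V'$ is a chain''; part (ii) is then declared immediate. This implicitly conflates two different reachability relations: the chain property of $V'$ is formulated with $\rightsquigarrow$ in the ambient digraph $G$ (witnessing paths may leave $V'$), whereas sinks of the induced subgraph $G'$ are defined through paths inside $G'$. Your Steps 1--2 key lemma --- that on a \emph{maximal} chain these two relations coincide, proved via the comparability-plus-maximality argument and the decomposition into mutual-reachability classes $E_1<\dots<E_r$ --- is precisely the statement needed to license that conflation, and it is the only place where maximality is used at all. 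Indeed, the paper's argument never invokes maximality explicitly, yet the proposition fails for non-maximal chains (take $a\to w\to b$ and $V'=\{a,b\}$: the induced subgraph has no arcs, hence two sinks $\{a\}$ and $\{b\}$), so some lemma of this kind is unavoidable. What the paper's approach buys is brevity; what yours buys is an actual justification of the step both proofs rest on, together with the sharper structural conclusion $S_{V'}=E_r=\{\text{upper bounds of } V'\}$ and $F_{V'}=E_1=\{\text{lower bounds of } V'\}$, from which (i) and (ii) fall out simultaneously.
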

\begin{proof}
To show (i) assume that $S_1'\subset V'$ and $S_2'\subset V'$ are different sinks of $G'$. Since both are SCCs, we have $S_1' \cap S_2' =\varnothing $. 
By definition of a sink, there is no directed path connecting $s_1\in S_1'$ and $s_2\in S_2'$ or vice versa, contradicting the fact that $V'$ is a chain. For similar reasons, $F_{V'}$ is unique as well. Part (ii) is immediate since any $v \in S_{V'}$ satisfies $u \rightsquigarrow v$
for any $u \in V'$. Similarly, any vertex in $F_{V'}$ is a lower bound of $V'$.
\end{proof}

\begin{example}\label{exa:sss}
We illustrate the notions introduced in this section in the following examples.
\begin{enumerate}
\item The oriented path $P_n$, $n\geq 2$, is $d$-connected but not strongly connected. In fact, it has $n$ SCCs given by each vertex. It has a unique maximal chain $V = V(P_n)$ with source $F_V = \{v_1\}$, sink $S_V = \{v_n\}$ and stream $T = \{v_2,v_3,...,v_{n-2},v_{n-1}\}$.
            \begin{figure}[h]
        		\begin{tikzpicture}[baseline,vertex/.style={circle,draw,fill, thick, inner sep=0pt,minimum size=2mm},scale=0.8]
						\node (1) at (-0.2,0) [vertex,label=above:$v_1$] {};
						\node (2) at (1.3,0) [vertex,label=above:$v_2$] {};
						\node (3) at (2.8,0) [vertex,label=above:$v_3$] {};
						\node (4) at (4.7,0) [vertex,label=above:$v_{n-2}$] {};
						\node (5) at (6.2,0) [vertex,label=above:$v_{n-1}$] {};
						\node (6) at (7.7,0) [vertex,label=above:$v_n$] {};
						\draw[-{Latex[ width=1.4mm]}](1) to node[] {} (2) ;
						\draw[-{Latex[ width=1.4mm]}](2) to node[] {} (3) ;
                        \draw (3)--($(3)!1/4!(4)$) ;
                        \draw[dotted] ($(3)!1/4!(4)$) -- ($(3)!2/3!(4)$);
						\draw ($(3)!2/3!(4)$)[-{Latex[ width=1.4mm]}] to node[] {} (4);
						\draw[-{Latex[ width=1.4mm]}](4) to node[] {} (5) ;
						\draw[-{Latex[ width=1.4mm]}](5) to node[] {} (6) ;
    			\end{tikzpicture}
        		\caption{Representation of the oriented path of length n.}
        		\label{fig:exa:sss:p_n}
        	\end{figure}
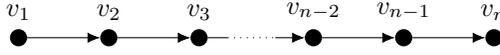
\item The oriented cycle $C_n$, $n\geq3$, is strongly connected and $C_n$ is its unique SCC. Moreover, 
        it has a unique maximal chain $V = V(C_n)$ where source and sink coincide, i.e. $F_V = S_V = V$, and has an empty stream, i.e. $T =\varnothing$.
            \begin{figure}[h]
        		\begin{tikzpicture}[baseline,vertex/.style={circle,draw,fill, thick, inner sep=0pt,minimum size=2mm},scale=0.6]
					\node (2) at ( 60:2cm) [vertex,label=above:$v_2$] {};
                    \node (3) at (120:2cm) [vertex,label=above:$v_3$] {};
                    \node (4) at (180:2cm) [vertex,label=100:$v_{n-2}$] {};
                    \node (5) at (240:2cm) [vertex,label=left:$v_{n-1}$] {};
                    \node (6) at (300:2cm) [vertex,label=360:$v_{n}$] {};
                    \node (1) at (360:2cm) [vertex,label=80:$v_1$] {};
					\draw[-{Latex[ width=1.4mm]}](1) to node[] {} (2) ;
					\draw[-{Latex[ width=1.4mm]}](2) to node[] {} (3) ;
                    \draw (3)--($(3)!1/4!(4)$) ;
                    \draw[dotted] ($(3)!1/4!(4)$) -- ($(3)!2/3!(4)$);
					\draw ($(3)!2/3!(4)$)[-{Latex[ width=1.4mm]}] to node[] {} (4);
					\draw[-{Latex[ width=1.4mm]}](4) to node[] {} (5) ;
					\draw[-{Latex[ width=1.4mm]}](5) to node[] {} (6) ;
					\draw[-{Latex[ width=1.4mm]}](6) to node[] {} (1) ;
    			\end{tikzpicture}
        		\caption{Representation of the cyclic digraph $C_n$.}
        		\label{fig:exa:sss:c_n}
        	\end{figure}
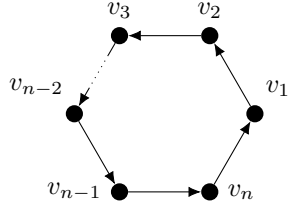   
\item The following digraph $G$ of order $8$ given in Fig.~\ref{fig:exa:sss} is neither strongly connected nor $d$-connected (note, for example, that there is no directed path joining $w_1$ with $w_2$). It has four SCCs given by 
$C_1=\{u_1,u_2,u_3\}$, $C_2=\{v_1,v_2,v_3\}$, $C_3=\{w_1\}$ and $C_4=\{w_2\}$. Moreover, $G$ has two maximal chains
given by 
\[
 V_1' = \{u_1,u_2,u_3\}\rightsquigarrow \{v_1,v_2,v_3\}\rightsquigarrow w_1\quadtext{and}
 V_2' = \{u_1,u_2,u_3\}\rightsquigarrow \{v_1,v_2,v_3\}\rightsquigarrow w_2
\]
with corresponding sources, sinks and stream given respectively by 
$F_{V_1'} = F_{V_2'} = C_1$, $S_{V_1'} = C_3$, $S_{V_2'} = C_4$, $T = \{v_1, v_2, v_3\}$.
            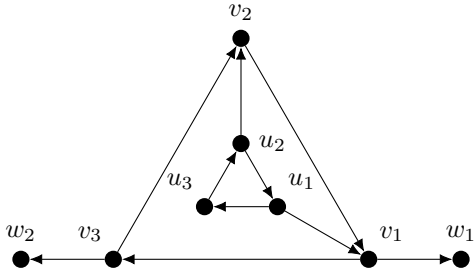
\begin{figure}[h]
                \centering
                    {\begin{tikzpicture}[baseline,vertex/.style={circle,draw,fill, thick, inner sep=0pt,minimum size=2mm},scale=0.7]
         					    \node (1) at (-30:0.8cm) [vertex,label={80:$u_1$}] {};
                                \node (2) at (90:0.8cm) [vertex,label=right:$u_2$] {};
                                \node (3) at (210:0.8cm) [vertex,label=100:$u_3$] {};
                                \node (4) at (-30:2.8cm) [vertex,label=80:$v_1$] {};
                                \node (5) at (90:2.8cm) [vertex,label=above:$v_2$] {};
                                \node (6) at (210:2.8cm) [vertex,label=100:$v_3$] {};
                                \node [right=of 4,vertex,label=above:$w_{1}$] (7) {};
                                \node [left=of 6,vertex,label=above:$w_{2}$] (8) {};
                                \draw[-{Latex[ width=1.4mm]}](1) to node[] {} (3) ;
                                \draw[-{Latex[ width=1.4mm]}](3) to node[] {} (2) ;
                                \draw[-{Latex[ width=1.4mm]}](2) to node[] {} (1) ;
                                \draw[-{Latex[ width=1.4mm]}](2) to node[] {} (5) ;
                                \draw[-{Latex[ width=1.4mm]}](1) to node[] {} (4) ;
                                \draw[-{Latex[ width=1.4mm]}](4) to node[] {} (6) ;
                                \draw[-{Latex[ width=1.4mm]}](6) to node[] {} (5) ;
                                \draw[-{Latex[ width=1.4mm]}](5) to node[] {} (4) ;
                                \draw[-{Latex[ width=1.4mm]}](6) to node[] {} (8) ;
                                \draw[-{Latex[ width=1.4mm]}](4) to node[] {} (7) ;
                    \end{tikzpicture}}
                \caption{Representation of digraph $G$ with $2$ maximal chains.}
                \label{fig:exa:sss}
            \end{figure}
\end{enumerate}
\end{example}

\begin{proposition}\label{prop:ss_existence} Let $(G, m)$ be a finite weighted digraph, then $G$ has at least a pair $F$ and $S$ of source and sink connected by a directed path.
\end{proposition}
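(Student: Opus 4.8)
The plan is to pass to the condensation (quotient) digraph of $G$ and exploit its acyclicity. First I would form the condensation $\hat G$ whose vertices are the strongly connected components $C_1,\dots,C_k$ of $G$; these exist and partition $V$, as recorded after Definition~\ref{def:dir_path}. I place a directed arc $C_i\to C_j$ (with $i\neq j$) precisely when $A^+(C_i,C_j)\neq\varnothing$, i.e.\ when some arc of $G$ runs from a vertex of $C_i$ to a vertex of $C_j$. The first key step is the standard fact that $\hat G$ is acyclic: a directed cycle $C_{i_1}\to\cdots\to C_{i_\ell}\to C_{i_1}$ would make all the $C_{i_r}$ mutually reachable in $G$, hence contained in a single strongly connected component, contradicting that distinct SCCs are disjoint.

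Next I would match the combinatorial notions to the SCC-based ones in Definition~\ref{def:sink_source}. A vertex $C_i$ of $\hat G$ of in-degree $0$ has no arc of $G$ entering it from another component, i.e.\ $A^-(C_i,C_i^c)=\varnothing$; since $C_i$ is itself an SCC, it is exactly a source $F$ of $G$. Dually, a vertex of out-degree $0$ is a sink $S$ of $G$. It therefore suffices to produce in the finite DAG $\hat G$ a directed path from an in-degree-$0$ vertex to an out-degree-$0$ vertex.

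For this I would use that a finite nonempty acyclic digraph always has a vertex of in-degree $0$ (trace any maximal path backwards along incoming arcs; it cannot repeat a vertex, so by finiteness it terminates at such a vertex). Call this component $F$; it is a source of $G$. Starting at $F$ and repeatedly following an outgoing arc forward, finiteness and acyclicity again force the process to stop at a vertex $S$ of out-degree $0$, which is a sink of $G$. The components traversed form a directed walk $F=C_{j_0}\to\cdots\to C_{j_m}=S$ in $\hat G$; lifting each inter-component arc to an actual arc of $G$ and using that any two vertices inside a single SCC are joined by a directed path, the transitivity of $\rightsquigarrow$ (noted after Definition~\ref{def:dir_path}) yields $u\rightsquigarrow s$ for some $u\in F$ and $s\in S$. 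This exhibits the desired source--sink pair connected by a directed path.

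The only point requiring a little care is the lifting step: concatenating the intra-component paths with the inter-component arcs gives a directed walk rather than a simple path, but since $\rightsquigarrow$ is a preorder this already certifies reachability and no reduction of the walk to a simple path is needed. The degenerate cases are covered automatically: an isolated vertex, or $G$ consisting of a single SCC, makes some component simultaneously a source and a sink, and the required path is the trivial one from a vertex to itself.
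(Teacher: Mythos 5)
Your proof is correct, but it follows a genuinely more structured route than the paper's. The paper works directly in $G$: it dismisses the trivial cases (no arcs, or a single SCC which is both source and sink), asserts that a source $F$ exists because the graph has at least one arc, and then states that the endpoints of the directed paths leaving $F$, which exist by finiteness, specify sinks connected to $F$. You instead pass to the condensation $\hat G$, prove it is a finite acyclic digraph, identify sources and sinks of $G$ (in the sense of Definition~\ref{def:sink_source}) with in-degree-$0$ and out-degree-$0$ vertices of $\hat G$, run the backward/forward tracing argument inside $\hat G$, and lift the resulting path back to $G$ using intra-SCC reachability and transitivity of $\rightsquigarrow$. The skeleton is the same (find a source, walk forward until forced to stop, the stopping component is a sink), but your detour through the condensation buys rigor at exactly the two points the paper glosses over: the existence of a source is proved rather than asserted, and termination at a sink is guaranteed by acyclicity of $\hat G$. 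The latter is a genuine subtlety: since directed paths in $G$ must have pairwise distinct vertices (Definition~\ref{def:dir_path}), a maximal directed path leaving $F$ need not end in a sink --- its last vertex may have all of its outgoing arcs pointing to already visited vertices, e.g.\ when the path terminates inside a non-sink SCC --- so the paper's direct argument needs precisely the SCC-level repair you supply (incidentally, the \emph{lower bounds} the paper invokes should be upper bounds under its own preorder convention). Your explicit handling of the walk-versus-path issue in the lifting step, via the preorder property of $\rightsquigarrow$, and of the degenerate case where the source and sink coincide, matches and sharpens the paper's trivial-case discussion.
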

\begin{proof}
The case where the digraph has no arcs or has a unique SCC (like an oriented cycle) which is both sink and source holds trivially. Consider now all directed paths that leave a source $F$ which exists since the graph has at least one arc. The lower bounds of these paths, which exist since the graph is finite, specify the corresponding sinks which, by construction, are connected to the vertices of $F$. 
\end{proof}

The next definition introduces a topological concept that is associated to one sink and one source of a finite digraph $G$.
\begin{definition}\label{def:dir_comp}(Directed component) 
Consider a finite weighted digraph $(G,m)$ and a pair $F$ (source) and $S$ (sink) connected by a directed path. Let $V'\subset V$ be the set of vertices of the all directed paths from $F$ to $S$. We say that a \textit{directed component} is the digraph $G'$ induced by $V'$.
\end{definition}

\begin{remark}\label{rem:dir_com}
Given a set of sources $\{F_i'\}_{i=1}^f$ and sinks $\{S_i'\}_{i=1}^s$ of a digraph $(G,m)$ the following results can be easily established:
\begin{enumerate}
\item $G$ can be decomposed in (not necessarily disjoint) directed components 
$\left\{\vec{G_j'}\right\}_{j=1}^{c}$, i.e. $G = \bigcup_j \vec{G_j'}$, with $f \leqslant c$ and $s \leqslant c$.
\item \label{rer_com:decom} If $F_j \neq S_i$ for $j=1,2,...,f$ and $i=1,2,...,s$. Then $V$ can be decomposed as 
\[ 
  V = \left(\cup_i S_{V_i'}\right) \sqcup \left(\cup_i F_{V_i'}\right) \sqcup T\;,
\]
where $\sqcup$ means disjoint union.
\item \label{rem:dir_com:dcomp_fs} $G$ has one $d$-component if and only if $G$ only contains one sink and one source.
\end{enumerate}
\end{remark}

\begin{proposition}\label{prop:d-connected}
If the graph $G$ is $d$-connected, then $G$ only has one sink and one source, i.e., $G$ has one directed component.
\end{proposition}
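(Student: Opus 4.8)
The plan is to deduce the statement by chaining together three results already available in the excerpt: the characterization of $d$-connectedness through maximal chains (Proposition~\ref{prop:d_con_one_mp}), the uniqueness of the sink and source along a maximal chain (Proposition~\ref{prop:ss_mp}), and the equivalence recorded in Remark~\ref{rem:dir_com}. No new combinatorial construction should be needed; the work lies in invoking the lemmas in the right order and matching up the notions of sink/source.

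First I would observe that $d$-connectedness forces the whole vertex set $V$ to be a chain. Indeed, for any $u,v\in V$ the relation $\rightsquigarrow$ provides a directed path in at least one direction, so $u$ and $v$ are comparable; hence $V$ itself is a chain, and being the entire vertex set it is trivially maximal and, by Proposition~\ref{prop:d_con_one_mp}, the \emph{unique} maximal chain. Taking $V'=V$, the subgraph induced by this maximal chain is $G$ itself.

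Next I would apply Proposition~\ref{prop:ss_mp}\,(i) to $V'=V$. Since the subgraph induced by $V$ is $G$, that proposition yields a unique sink $S_{V}$ and a unique source $F_{V}$, which (because $V'=V$) are exactly a sink and a source of $G$ in the sense of Definition~\ref{def:sink_source}. Thus $G$ has precisely one sink and one source. Finally, Remark~\ref{rem:dir_com}~(\ref{rem:dir_com:dcomp_fs}) — stating that $G$ has a single directed component if and only if it contains exactly one sink and one source — delivers the conclusion that $G$ consists of one directed component.

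The only delicate point is the bookkeeping in the middle step: one must be sure that the sink and source of the \emph{induced} subgraph on the maximal chain, as produced by Proposition~\ref{prop:ss_mp}, really are a sink and source of $G$ as in Definition~\ref{def:sink_source}. Here this is automatic, because $d$-connectedness forces $V'=V$ and hence the induced subgraph to be all of $G$, so no genuine obstacle arises; the argument reduces to assembling the already-established lemmas in the correct sequence.
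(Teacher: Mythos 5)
Your proof is correct, and it draws on the same circle of ideas as the paper -- the maximal-chain characterization of $d$-connectedness -- but assembles them in a genuinely different way. The paper argues by contradiction: if $G$ had two distinct sinks or sources, their vertices would be pairwise incomparable under $\rightsquigarrow$, forcing at least two distinct maximal chains and contradicting Proposition~\ref{prop:d_con_one_mp}; notably, the incomparability step is left implicit there. You instead run the argument forward: $d$-connectedness makes $V$ itself a chain, hence trivially the unique maximal chain, and then Proposition~\ref{prop:ss_mp}~(i) applied to $V'=V$ (whose induced subgraph is all of $G$, so its sink and source are literally a sink and source of $G$ in the sense of Definition~\ref{def:sink_source}) yields existence and uniqueness at once. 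Your route buys two things: the incomparability argument is not re-proved or hand-waved but invoked where it already lives (inside the proof of Proposition~\ref{prop:ss_mp}), and you explicitly discharge the ``one directed component'' clause via Remark~\ref{rem:dir_com}, which the paper's proof does not address at all. The paper's version is terser but relies on the reader to supply both of these steps.
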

\begin{proof}
	If $G$ has more than one sink or source, then there would be more than one maximal chain. But this leads to a contradiction by Proposition~\ref{prop:d_con_one_mp}.
\end{proof}
\begin{example}\label{exa:counter_exa_1s1f}
	Note that the reverse implication in the preceding proposition is not always true. As a counterexample consider the following digraph given in Figure~\ref{fig:exa:counter_exa_1s1f}, with one sink $\{v_4\}$, one source $\{v_1\}$ and one 
	$d$-component. Note that $G$ is not $d$-connected as the vertices $\{v_2\}$ and $\{v_3\}$ are not connected by a directed path.
	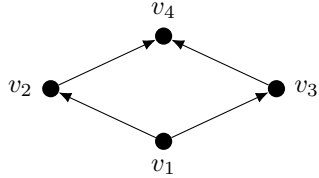
\begin{figure}[h]\label{fig:deledge}
		\centering
		\captionsetup{width=0.5\linewidth}
		{ 
			\begin{tikzpicture}[baseline,vertex/.style={circle,draw,fill, thick, inner sep=0pt,minimum size=2mm},scale=1]
				\node (1) at (1,0) [vertex,label=below:$v_1$] {};
				\node (2) at (-0.5,0.7) [vertex,label=left:$v_2$] {};
				\node (3) at (2.5,0.7) [vertex,label=right:$v_3$] {};
				\node (4) at (1,1.4) [vertex,label=above:$v_4$] {};		
				\draw[-{Latex[ width=1.4mm]}](1) edge node[below] {} (3);
				\draw[-{Latex[ width=1.4mm]}](1) edge node[below] {} (2);
				\draw[-{Latex[ width=1.4mm]}](3) edge node[below] {} (4);
				\draw[-{Latex[ width=1.4mm]}](2) edge node[left] {} (4);
		\end{tikzpicture} } 
		\caption{Example of a digraph with one $F=\{v_1\}$ and sink $S=\{v_4\}$ that is not $d$-connected.}
		\label{fig:exa:counter_exa_1s1f}
	\end{figure}
\end{example}

In the following definition we will introduce the notion of compression of an operator to a subset of vertices of the graphs. This notion will be applied to describe different properties of the in/out Laplacians. In spectral graph theory compressions are very useful when describing Laplacians with Dirichlet conditions on a selected subset of vertices (see, for example, \cite{bauer:12}). The spectrum of Dirichlet Laplacians naturally appears when developing spectral localization (\emph{bracketing}) techniques of Laplacians on periodic graphs or in the recent construction method of isospectral graphs (see \cite{lledo-post:08,fclp:23}). See also \cite{balti:17} for the definition of a non-symmetric Dirichlet operator on digraphs.

\begin{definition}(Compression of an operator)\label{def:compression}
Let $\mathcal{K}\colon \ell_{2}(V,m) \to\ell_{2}(V,m) $ be a linear operator and $V' \subset V$ be a subset of vertices. Then the \textit{compression} of $\mathcal{K}$ to $V'$ is the operator 
$\mathcal{K}_{V'}\colon \ell_{2}(V',m) \to\ell_{2}(V',m) $, given by
$$
\mathcal{K}_{V'} = \iota_{V'}^* \circ \mathcal{K} \circ \iota_{V'}\;,
$$
where $\iota_{V'}\colon \ell_{2}(V', m) \to \ell_{2}(V, m)$ the natural subspace embedding and
the corresponding adjoint $\iota_{V'}^*\colon \ell_{2}(V, m) \to \ell_{2}(V', m)$ is the orthogonal projection onto the subspace $\ell_{2}(V', m)$. Relabeling, if necessary, the vertex set one obtains the block structure
$$
\mathcal{K} \cong 
\left( \begin{array}{c|c}
  \mathcal{K}_{V'} & * \\
  \hline
                 * & *
\end{array}\right).
$$
\end{definition}

Note that, in general, the compression of a Laplacian to a subset of vertices is not the Laplacian of a subgraph. But this property is true if we compress the components $\mathcal{L}^+$ or $\mathcal{L}^-$ respectively to sources ($F$) or sinks ($S$) of the digraph (see also Example~\ref{exa:spec_par}) since sources/sinks have no arc entering~$F$/leaving~$S$ from their respective complements.

\begin{proposition}\label{prop:compression_on_SF} Let $(G,m)$ be a weighted digraph. 
Then, the compression of the in-component of the Laplacian $\mathcal{L}^+$ of $G$ to a source $F$ is the Laplacian operator $\mathcal{L}^+_F$ for the induced digraph $G_F$ generated by $F$. The same result holds for the out-Laplacian $\mathcal{L}^-$ compressed to a sink $S$ of $G$.
\end{proposition}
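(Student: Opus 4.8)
The plan is to reduce everything to the pointwise formula for $\mathcal{L}^+$ established in Proposition~\ref{prop:lap_d_exp} and to exploit the single defining feature of a source: no arc enters it from the complement. First I would record the structural consequence of the source condition. Since $F$ is a source, Definition~\ref{def:sink_source} gives $A^-(F,F^c)=\varnothing$, i.e.\ there is no arc $a$ with $\partial_+a\in F$ and $\partial_-a\in F^c$. Equivalently, for every $v\in F$ the in-arcs of $v$ in $G$ already have their initial vertex inside $F$, so that $A_v^+=(A_F)_v^+$, where $(A_F)_v^+$ denotes the arcs of the induced digraph $G_F$ ending at $v$. In particular the opposite vertex $v_a=\partial_-a$ lies in $F$ for every $a\in A_v^+$, and the in-relative weight of $v$ is the same whether computed in $G$ or in $G_F$.

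Next I would simply unfold the compression. Fix $\varphi\in\ell_2(F,m)$ and write $\widetilde\varphi:=\iota_F\varphi\in\ell_2(V,m)$ for its extension by zero to $F^c$. By Definition~\ref{def:compression}, for $v\in F$ the compressed operator acts as $\big((\mathcal{L}^+)_F\varphi\big)(v)=(\mathcal{L}^+\widetilde\varphi)(v)$, and Proposition~\ref{prop:lap_d_exp} rewrites the right-hand side as $\frac{1}{m(v)}\sum_{a\in A_v^+}\big(\widetilde\varphi(v)-\widetilde\varphi(v_a)\big)\,m(a)$. Here the zero-extension is harmless precisely because of the source property: both $v$ and every opposite vertex $v_a=\partial_-a$ lie in $F$, so $\widetilde\varphi(v)=\varphi(v)$ and $\widetilde\varphi(v_a)=\varphi(v_a)$ throughout the sum.

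Finally I would identify the result with the subgraph Laplacian. Using $A_v^+=(A_F)_v^+$ from the first step, the displayed sum is exactly $\frac{1}{m(v)}\sum_{a\in(A_F)_v^+}\big(\varphi(v)-\varphi(v_a)\big)m(a)$, which by Proposition~\ref{prop:lap_d_exp} applied to $G_F$ is $(\mathcal{L}^+_{G_F}\varphi)(v)$. As $v\in F$ was arbitrary, $(\mathcal{L}^+)_F=\mathcal{L}^+_{G_F}$. The statement for a sink $S$ and the out-Laplacian $\mathcal{L}^-$ follows verbatim after interchanging the roles of $\partial_+$ and $\partial_-$: the sink condition $A^+(S,S^c)=\varnothing$ guarantees that for $v\in S$ the out-arcs $A_v^-$ and their opposite vertices $v_a=\partial_+a$ stay inside $S$, so the same zero-extension argument applies.

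I would expect the only genuinely delicate point to be the bookkeeping at the diagonal term. For a general subset $V'$ the compression and the induced-subgraph Laplacian differ exactly in the coefficient of $\varphi(v)$: the compression keeps the full in-relative weight $\operatorname{rel}_m^+(v)$ computed in $G$, whereas the subgraph Laplacian uses the in-relative weight computed in $G_F$. These agree if and only if no in-arc of $v$ leaves $F$, which is precisely the source hypothesis; so the whole proof hinges on making this equality of diagonal weights explicit, rather than on the (automatic) matching of the off-diagonal neighbor terms.
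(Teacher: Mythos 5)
Your proof is correct and follows essentially the same route as the paper's: both arguments reduce to the observation that the source condition $A^-(F,F^c)=\varnothing$ forces every in-arc of a vertex $v\in F$ to lie in the induced digraph $G_F$, so that the diagonal (relative-weight) term and the off-diagonal neighbor terms of the compression coincide with those of $\mathcal{L}^+_{G_F}$. The only difference is presentational — you work with the pointwise formula of Proposition~\ref{prop:lap_d_exp} applied to zero-extensions, while the paper compares matrix entries via Remark~\ref{rem:ONB} — and your closing remark that the off-diagonal matching holds for \emph{any} subset while the diagonal matching is exactly the source hypothesis is the same point the paper makes in the remark following the proposition.
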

\begin{proof}
We focus only on the case of $\mathcal{L}^+$ since the case of the out-Laplacian $\mathcal{L}^-$ is done similarly. By Remark~\ref{rem:ONB} each entry $(i,j)$ of in components of $\mathcal{L}^+$ is the weighted measure of the arcs leaving $v_j$ and entering $v_i$ if $i\neq j$, or the weighted measure of the arcs entering $v_i$ (loops removed) if $i=j$. Since sources of $G$ have no entering arcs from vertices in its complement (see Definition~\ref{def:sink_source}), the only arcs incident to a vertex in $F$ must be in $F$ as well. In other words, the compression $\mathcal{L}_F^+$ represents the arcs connecting the vertices $v\in F$, and the diagonal represents these same connections since the arcs leaving the source vertices into its complement are not considered in the rows where the block $\mathcal{L}_F^+$ is placed. That is, 
$\mathcal{L}^+_F$ represents the Laplacian operator for the induced digraph $G_F$ generated by $F$.
\end{proof}

\begin{remark}
A more general result applies to the in/out adjacency operators $\mathcal{A^\pm}$. In fact, in this case 
the compressed matrices to \emph{any} subset of vertices correspond to the adjacency matrix of the corresponding induced subgraph.
To see this, consider the matrix representation of the in/out adjacency matrices and observe that now the diagonal only contains information about the loops at each vertex. Thus, the matrix compression to any subset of vertices can be performed without the restrictions needed in the case of the components of the corresponding Laplacians (see also Example~\ref{exa:w_digrph}).
\end{remark}

The preceding results lead to the following block triangular structure of $\cL^\pm$ and the corresponding spectral decomposition.

\begin{proposition}\label{prop:spec_par}(Decomposition of the spectrum) 
Let $(G,m)$ be a weighted digraph with $G=(V, A, \partial)$ and denote by $s\in \N$ (respectively $f\in \N$) the number of sinks (respectively sources) of $G$. Consider
$F = \sqcup_{i=1}^f F_i$, where $F_i \subset V$ is a source of $G$, $S = \sqcup_{i=1}^s S_i$, where $S_i \subset V$ is a sink of $G$ and, finally, $T$ the stream of $G$. Then,
\begin{enumerate}
\item  $\sigma(\mathcal{L}_G^+) = \bigcup\limits_{i=1}^{f} \sigma(\mathcal{L}_{F_i}^+) \cup \sigma(\mathcal{L}_{T \sqcup S}^+)$.
\item  $\sigma(\mathcal{L}_G^-) = \bigcup\limits_{i=1}^{s} \sigma(\mathcal{L}_{S_i}^-) \cup \sigma(\mathcal{L}_{T \sqcup F}^-)$.
\end{enumerate}
\end{proposition}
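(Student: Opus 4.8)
The plan is to show that, after a suitable relabelling of the vertices, $\mathcal{L}^+$ (respectively $\mathcal{L}^-$) admits a block lower-triangular matrix representation whose diagonal blocks are exactly the compressions appearing on the right-hand side; the result then follows because the characteristic polynomial of a block-triangular matrix is the product of those of its diagonal blocks. I describe part (i); part (ii) is the mirror image under exchanging $+/-$ and sources/sinks.

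First I would order the vertices so that the sources come first, grouped as $F_1,\dots,F_f$, followed by the remaining vertices, which form $T\sqcup S$ since $V=F\sqcup T\sqcup S$ (Remark~\ref{rem:dir_com}~(\ref{rer_com:decom})). With respect to the ONB of Remark~\ref{rem:ONB} and this ordering, the off-diagonal entries of $\mathcal{L}^+$ are $(\mathcal{L}^+)_{vw}=-m(A^+(w,v))/\sqrt{m(v)m(w)}$ for $v\neq w$, which is nonzero only when there is an arc from $w$ to $v$, i.e.\ an arc ending at $v$.

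The key observation is that a source decouples along its rows. If $v\in F_i$ then, since $A^-(F_i,F_i^c)=\varnothing$, every arc ending at $v$ must start inside $F_i$; hence $(\mathcal{L}^+)_{vw}=0$ whenever $v\in F_i$ and $w\notin F_i$, and the diagonal entry involves only arcs internal to $F_i$. Thus the row block indexed by $F_i$ is supported in the column block $F_i$ alone, and by Proposition~\ref{prop:compression_on_SF} this diagonal block equals $\mathcal{L}_{F_i}^+$, the Laplacian of the induced digraph $G_{F_i}$. Arcs leaving a source into $T\sqcup S$ contribute only to entries whose row index lies in $T\sqcup S$, i.e.\ to the strictly lower blocks, so $\mathcal{L}^+$ takes the form
$$
\mathcal{L}^+\cong
\left(\begin{array}{cccc}
\mathcal{L}_{F_1}^+ & & & 0\\
 & \ddots & & \vdots\\
 & & \mathcal{L}_{F_f}^+ & 0\\
* & \cdots & * & \mathcal{L}_{T\sqcup S}^+
\end{array}\right),
$$
where the bottom-right block is the compression $\mathcal{L}_{T\sqcup S}^+$ of Definition~\ref{def:compression}.

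Finally I would expand $\det(\mathcal{L}^+-\lambda I)=\big(\prod_{i=1}^{f}\det(\mathcal{L}_{F_i}^+-\lambda I)\big)\det(\mathcal{L}_{T\sqcup S}^+-\lambda I)$ from this block-triangular form, so that the characteristic polynomials multiply and the eigenvalues accumulate with their multiplicities, giving (i) as an equality of multisets (recall the convention on $\sigma$ in the Notation). Part (ii) follows identically by ordering the sinks $S_1,\dots,S_s$ first and using $A^+(S_j,S_j^c)=\varnothing$ together with $(\mathcal{L}^-)_{vw}=-m(A^-(w,v))/\sqrt{m(v)m(w)}$, which is supported on arcs leaving $v$. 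I expect the only delicate point to be verifying that the inter-block coupling always lands in the lower-triangular part: this is precisely what the source (respectively sink) condition on incoming (respectively outgoing) arcs guarantees, after which the spectral factorization is routine linear algebra.
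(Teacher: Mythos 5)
Your proof is correct and follows essentially the same route as the paper's: both order the vertices so that each source $F_i$ forms its own block, use the condition $A^-(F_i,F_i^c)=\varnothing$ to show the rows indexed by $F_i$ vanish outside the diagonal block $\mathcal{L}_{F_i}^+$, and then factor the characteristic polynomial of the resulting block-triangular matrix. The only (immaterial) difference is that you place the sources first, obtaining a block lower-triangular form, whereas the paper lists $T\sqcup S$ first and gets the upper-triangular transpose of the same picture; like the paper, your argument implicitly assumes no SCC is simultaneously a source and a sink, via the hypothesis of Remark~\ref{rem:dir_com}~(\ref{rer_com:decom}).
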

\begin{proof}
It is enough to show (i) since the case (ii) is done similarly. Furthermore, we will only consider the nontrivial scenario in which $G$ possesses at least one non-coincident source and sink.
Then like in Remark~\ref{rem:dir_com}~, one can partition the vertex set as 
$ V = (T\sqcup S) \sqcup (\sqcup_i F_i)$, which gives the following upper triangular matrix:
$$
\mathcal{L}_G^+ = 
\left( \begin{array}{c|c|c|c|c}
\mathcal{L}_{T \sqcup S}^+ &* &  \cdots & \cdots & *\\
\hline
0& \mathcal{L}_{F_1}^+ &0 & \cdots & 0 \\
\hline
0& 0&  \mathcal{L}_{F_2}^+  & \ddots & \vdots\\
\hline
\vdots& \vdots& \ddots & \ddots & 0 \\
\hline
0& 0& \hdots & 0 & \mathcal{L}_{F_f}^+\\
\end{array}\right) 
\vspace{3mm}
$$
Note that the row blocks of a source $F_i$ of $G$ (excluding $\mathcal{L}_{F_i}^+$) are zero, since by Definition~\ref{def:sink_source} there are no arcs connecting the set of vertices $F_i^c$ to $F_i$. Then, we have
$$
\det(\mathcal{L}_G^+-\lambda\1) = 
                \det\left(\mathcal{L}_{T \sqcup S}^+-\lambda\1\right)
                \cdot \prod\limits_{i=1}^{f} 
                \det\left(\mathcal{L}_{F_i}^+-\lambda\1\right) \;,
$$
which implies the statement on the spectrum.
\end{proof}

\begin{example}\label{exa:spec_par}
We give some examples of the block decomposition for the out-Laplace matrix 
$\mathcal{L}^-$ and relate its spectrum to the spectrum of the given compressions.
\begin{enumerate}
    \item Consider the digraph $G_1$ given by Fig.~\ref{fig:exa:spec_par_1_1} with combinatorial weights (cf. Example~\ref{ex:comb_w}) and the corresponding out-Laplacian $\mathcal{L}^-_{G_1}$
        \begin{figure}[h]
            \begin{tabular}{p{5cm} c}
                	{\begin{tikzpicture}[baseline,vertex/.style={circle,draw,fill, thick, inner sep=0pt,minimum size=2mm},scale=1.2]
            		    \node (1) at (0,-1.1) [vertex,label=above:$v_1$] {};
                        \node (2) at (-1.1,0) [vertex,label=left:$v_2$] {};
                        \node [above=of 2,vertex,label=left:$v_{3}$](3) {};
                        \node (4) at (1.1,0) [vertex,label=left:$v_4$] {};
                        \node [above=of 4,vertex,label=left:$v_{5}$](5) {};
        				\draw [-{Latex[ width=1.4mm]}](1) to node[above] {} (2) ;
        				\draw (2) edge node[above] {} (3);
        				\draw [-{Latex[ width=1.4mm]}](1) to node[above] {} (4) ;
        				\draw (4) edge node[above] {} (5);
		            \end{tikzpicture}}&{
		            $\mathcal{L}^-_{G_1} = 
			            \begin{bmatrix*}[r]
        					2 & \shortminus1 & 0 & \shortminus1 & 0\\
        					0 & 1 & \shortminus1 & 0 &0\\
        					0 & \shortminus1 & 1 & 0  &0\\
        					0 & 0 & 0 & 1 & \shortminus1\\
        					0 & 0 & 0 & \shortminus1 & 1
				        \end{bmatrix*} . $
		            }
	            \end{tabular}
		    \caption{Digraph representation of $G_1$ and $\mathcal{L}^-_{G_1}$.}
		    \label{fig:exa:spec_par_1_1}
    	\end{figure}
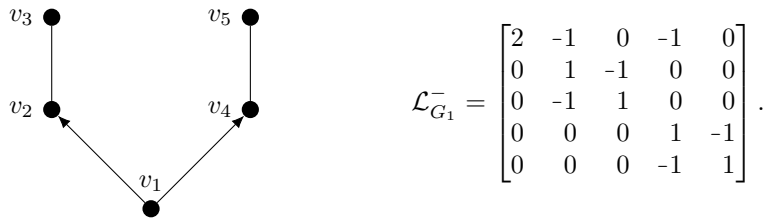
    	
Then, we have $\sigma(\mathcal{L}_{G_1}^-) = \{0,0,2,2,2\}$. Considering the compression to $F = \{v_1\}$, $S_1 = \{v_2,v_3\}$ and $S_2 = \{v_4,v_5\}$ we obtain
\[
\mathcal{L}_F^- = 
	            \begin{bmatrix*}[r]
                2
                \end{bmatrix*} 
\quadtext{and}
\mathcal{L}_{S_1}^- ={L}_{S_2}^-=
	     \begin{bmatrix*}[r]
             1 & \shortminus1\\
              \shortminus1 & 1 \\
        \end{bmatrix*} \;,
\]
verifying $\sigma(\mathcal{L}_{F}^-) = \{2\}$ and $\sigma(\mathcal{L}_{S_1}^-) = \sigma(\mathcal{L}_{S_2}^-)= \{0,2\}$.
    	
    \item Consider the digraph $G_2$ given in Fig.~\ref{fig:exa:spec_par_1_2} with combinatorial weights
    and with out-Laplace matrix $\mathcal{L}^-_{G_2}$:
        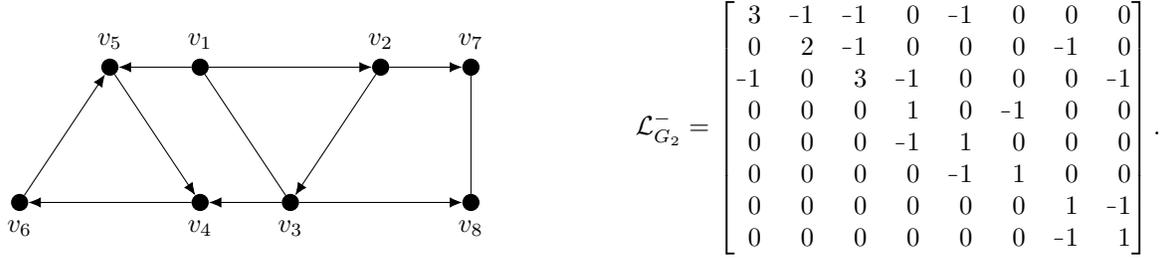
\begin{figure}[h]
            \begin{tabular}{p{8cm} c}
                	{\begin{tikzpicture}[baseline,vertex/.style={circle,draw,fill, thick, inner sep=0pt,minimum size=2mm},scale=0.6]
            		    \node (1) at (-4,1.5) [vertex,label=above:$v_5$] {};
                        \node (2) at (-6,-1.5) [vertex,label=below:$v_6$] {};
                        \node (3) at (-2,-1.5) [vertex,label=below:$v_4$] {};
                        \node (4) at (-2,1.5) [vertex,label=above:$v_1$] {};
                        \node (5) at (2,1.5) [vertex,label=above:$v_2$] {};
                        \node (6) at (0,-1.5) [vertex,label=below:$v_3$] {};
                        \node (7) at (4,1.5) [vertex,label=above:$v_7$] {};
                        \node (8) at (4,-1.5) [vertex,label=below:$v_8$] {};
                        \draw [-{Latex[ width=1.4mm]}](1) to node[above] {} (3) ;
                        \draw [-{Latex[ width=1.4mm]}](3) to node[above] {} (2) ;
                        \draw [-{Latex[ width=1.4mm]}](2) to node[above] {} (1) ;
                        \draw [-{Latex[ width=1.4mm]}](4) to node[above] {} (1) ;
                        \draw [-{Latex[ width=1.4mm]}](6) to node[above] {} (3) ;
                        \draw [-{Latex[ width=1.4mm]}](4) to node[above] {} (5) ;
                        \draw [-{Latex[ width=1.4mm]}](5) to node[above] {} (6) ;
        				\draw (4) edge node[above] {} (6);
        				\draw [-{Latex[ width=1.4mm]}](5) to node[above] {} (7) ;
                        \draw [-{Latex[ width=1.4mm]}](6) to node[above] {} (8) ;
                        \draw (7) edge node[above] {} (8);
		            \end{tikzpicture}}&{
		            $\mathcal{L}^-_{G_2} = 
			            \begin{bmatrix*}[r]
        					3 & \shortminus1 & \shortminus1 & 0 & \shortminus1 & 0 & 0 & 0\\
        					0 & 2 & \shortminus1 & 0 & 0 & 0 & \shortminus1 & 0\\
        					\shortminus1 & 0 & 3 & \shortminus1 & 0 & 0 & 0 & \shortminus1\\
        					0 & 0 & 0 & 1 & 0 & \shortminus1 & 0 & 0 \\
        					0 & 0 & 0 & \shortminus1 & 1 & 0 & 0 & 0 \\
        					0 & 0 & 0 & 0 & \shortminus1 & 1 & 0 & 0 \\
        					0 & 0 & 0 & 0 & 0 & 0 & 1 & \shortminus1\\
        					0 & 0 & 0 & 0 & 0 & 0 & \shortminus1 & 1\\
				        \end{bmatrix*} . $
		            }
	            \end{tabular}
		    \caption{Digraph representation of $G_2$ with out-Laplacian $\mathcal{L}^-_{G_2}$.}
		    \label{fig:exa:spec_par_1_2}
    	\end{figure}
     
The spectrum of the out-Laplace matrix $\mathcal{L}^-_{G_2}$:
$$
\sigma(\mathcal{L}^-_{G_2}) =
      \left\{0,2,3, \frac{5\pm\sqrt{5}}{2}, \frac{3\pm i\sqrt{3}}{2}\right\}\;.
$$

Taking now into account the compression to the blocks for $F = \{v_1,v_2,v_3\}$, $S_1 = \{v_4,v_5,v_6\}$ and $S_2 = \{v_7,v_8\}$ and proceeding in a similar manner as in the previous example, we obtain 
$$\sigma(\mathcal{L}_{F}^-)   = \left\{\frac{5\pm\sqrt{5}}{2},3\right\},\; 
  \sigma(\mathcal{L}_{S_1}^-) = \left\{0,\frac{3\pm i\sqrt{3}}{2}\right\} 
\quadtext{and} \sigma\left(\mathcal{L}_{S_2}^-\right)= \{0,2\}\;.$$
\end{enumerate}
\end{example}

Note that in preceding examples, out-Laplace matrices compressed to sinks share the eigenvalue 0. We conclude this section by proving a theorem that establishes an important relationship between the number of sources/sinks of a digraph and the multiplicity of the zero eigenvalue of $\mathcal{L}^{+/-}$. To simplify the proof, we first state the following properties of eigenfunctions $\varphi \in \operatorname{ker}(\mathcal{L}^{\pm})$.

The next result gives several useful characterizations of the functions in the kernel of the in/out Laplacians. For example, its values at a vertex are given by the \textit{oriented weighted mean} over the in/out neighbors of $v$ (see also \cite{veerman:06}).

\begin{lemma}\label{lem:phi_in_ker}
    Let $(G, m)$ be a weighted digraph $G=(V, A, \partial)$, $\mathcal{L}^{\pm} = (d^\pm)^{*} d$ the corresponding in$/$out-Laplace operator and $\varphi \in \ell_{2}(V, m)$ a function defined on the set of vertices $V$.
    \begin{enumerate}
        \item $\varphi \in \operatorname{ker}(\mathcal{L}^{\pm}) \iff \varphi(v) = \frac{1}{m(A^\pm_{v})} \sum_{a \in A^\pm_{v}} \varphi(v_{a}) m(a)$, $v\in V^{+/-}$, where  $m\left(A^{\pm}_v\right):=\sum_{a \in A^{\pm}_v} m(a)$ and $v_a$ denotes the vertex opposed to $v\in V$ along the arc $a\in A$.
        
        \item \label{lem:phi_in_ker:max}If $G$ has a unique source$/$sink denoted by $F/S$ and $\varphi \in \operatorname{ker}(\mathcal{L}^{\pm})$, then $\varphi(v)$ is constant on $F/S$ and the maximum of $\{|\varphi(v)| \mid v \in V\}$ is attained on $F/S$.
    \end{enumerate}
\end{lemma}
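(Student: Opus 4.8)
The plan is to prove the two parts separately: part (i) is a direct algebraic rearrangement of the explicit formula for $\mathcal{L}^{\pm}$, while part (ii) is a discrete maximum principle built on top of (i). For part (i), I would start from Proposition~\ref{prop:lap_d_exp}, which gives $(\mathcal{L}^{\pm}\varphi)(v) = \operatorname{rel}_m^{\pm}(v)\varphi(v) - \frac{1}{m(v)}\sum_{a\in A_v^{\pm}}\varphi(v_a)m(a)$. Using $\operatorname{rel}_m^{\pm}(v) = m(A_v^{\pm})/m(v)$ from \eqref{eq:rel} and clearing the positive factor $1/m(v)$, the equation $(\mathcal{L}^{\pm}\varphi)(v) = 0$ becomes $m(A_v^{\pm})\varphi(v) = \sum_{a\in A_v^{\pm}}\varphi(v_a)m(a)$. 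For $v \in V^{\pm}$ (as in \eqref{in-out-vertices}) one has $m(A_v^{\pm}) > 0$, so this is exactly the claimed mean-value identity; for $v \notin V^{\pm}$ both sides vanish and the vertex equation is automatically satisfied, imposing no condition. Hence $\varphi \in \ker\mathcal{L}^{\pm}$ if and only if the mean-value identity holds at every $v \in V^{\pm}$, which is (i).

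For part (ii) I would argue only for $\mathcal{L}^+$ and the source $F$, the sink case for $\mathcal{L}^-$ being symmetric under reversing all orientations. Set $M = \max\{|\varphi(v)| : v \in V\}$; the case $M = 0$ is trivial, so assume $M > 0$ and pick $v_0$ with $|\varphi(v_0)| = M$. The heart of the argument is the rigidity in (i): for $\mathcal{L}^+$ the opposite vertex $v_a$ of an arc $a \in A_v^+$ is its initial vertex, so (i) writes $\varphi(v)$ as a convex combination $\sum_{a} \lambda_a \varphi(v_a)$ with weights $\lambda_a = m(a)/m(A_v^+) > 0$ summing to one, i.e. an average over the in-neighbours. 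If $v \in V^+$ attains $|\varphi(v)| = M$, then $M = |\sum_a \lambda_a \varphi(v_a)| \le \sum_a \lambda_a|\varphi(v_a)| \le M$ forces equality throughout: the second equality gives $|\varphi(v_a)| = M$ for every in-neighbour, while equality in the triangle inequality forces all $\varphi(v_a)$ to share a common argument, whence $\varphi(v_a) = \varphi(v)$ for every in-neighbour.

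Consequently the level set $W = \{v : \varphi(v) = \varphi(v_0)\}$ is backward closed: whenever $v \in W$ has incoming arcs, all its in-neighbours lie in $W$ again. Iterating along directed paths shows every vertex reaching $v_0$ lies in $W$, i.e. $R^-(v_0) := \{u : u \rightsquigarrow v_0\} \subseteq W$. It then remains to see $F \subseteq R^-(v_0)$, for which I would use that a unique source forces every vertex to be reachable from $F$: passing to the condensation DAG of strongly connected components, $F$ is its unique minimal element (by Definition~\ref{def:sink_source}), and in a finite DAG tracing predecessors backward from any node terminates at a minimal element, necessarily $F$; reversing that path exhibits a directed path $F \rightsquigarrow v_0$. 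Since $F$ is strongly connected, this yields $F \subseteq R^-(v_0) \subseteq W$, so $\varphi \equiv \varphi(v_0)$ on $F$ — constant and of modulus $M$ — which is precisely the assertion that $\varphi$ is constant on $F$ and that $\max|\varphi|$ is attained on $F$.

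The main obstacle is the propagation step in part (ii). Since $\varphi$ is complex-valued, the decisive point is the rigidity of the triangle inequality, which forces the in-neighbour values to \emph{coincide} with $\varphi(v_0)$ rather than merely to have maximal modulus; a purely modulus-based estimate would not suffice to conclude constancy. The accompanying combinatorial input — that a \emph{unique} source reaches every vertex of $G$ — is the only genuinely graph-theoretic ingredient and the place where I would be most careful, since it fails as soon as $G$ has several sources.
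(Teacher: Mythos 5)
Your proposal is correct and follows essentially the same route as the paper: part (i) is the pointwise rearrangement of the formula in Proposition~\ref{prop:lap_d_exp} (the paper reaches the same identity by pairing $\mathcal{L}^{\pm}\varphi$ against indicator test functions), and part (ii) is the same discrete maximum principle using triangle-inequality rigidity at a modulus-maximizing vertex followed by backward propagation to the unique source/sink. Your write-up is in fact slightly more careful than the paper's at two points the paper leaves implicit --- the vacuous vertex equations at $v \notin V^{\pm}$ in the ``if and only if'' of (i), and the condensation-DAG argument showing that a unique source reaches every vertex, which justifies the paper's phrase ``reasoning recursively \dots there will be a vertex $v_1 \in F/S$''.
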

\begin{proof}
To show (i) let $\varphi \in \operatorname{ker}(\mathcal{L}^\pm)$. Then, for any $\tau \in \ell_{2}(V,m)$
$$
\begin{aligned}
0 = \langle \mathcal{L}^\pm\varphi, \tau \rangle_{\ell_{2}(V,m)} 
        &= \sum_{v\in V} \left(\frac{1}{m(v)}\sum_{a \in A^\pm_v}(\varphi(v) - \varphi(v_a)) m(a)\right)\overline{\tau(v)} m(v) \\
        &=\sum_{v\in V} \sum_{a \in A^\pm_v}(\varphi(v) - \varphi(v_a)) m(a)\overline{\tau(v)}\;.
\end{aligned}
$$
Taking for $\tau$ the indicator functions of the in/out vertices, i.e., 
$\{\1_{w} \mid w \in V^\pm\}$, we obtain $|V^\pm|$ equations verifying
$$
    0          = \sum_{a \in A^\pm_{w}}(\varphi(w) - \varphi(w_{a})) m(a)
    \quadtext{hence}
    \varphi(w) = \frac{1}{m(A^\pm_{w})} \sum_{a \in A^-_{w}} \varphi(w_{a}) m(a)\;,
$$
where $m(A^\pm_{w}) := \sum_{a \in A^\pm_{w}}m(a)$. In other words, the value of $\varphi(v)$, $v \in V^\pm$ is given by the \textit{oriented mean} over the in/out neighbours of $v$.

To prove (ii) let $v_M \in V$ such that $|\varphi(v_M)| = \max_{v\in V}|\varphi(v)|$. If $v_M \in F$ (or $v_M\in S$), then there is nothing to show, so assume $v_M \in V\setminus F$ (or $v_M\in V\setminus S$). Then, by part (i) we have:
    
$$
\begin{aligned}
	|\varphi(v_M)| \leq& \frac{1}{m(A^\pm_{v_M})} \sum_{a \in A^\pm_{v_M}} |\varphi(v_{M,a})| m(a)\\
                   \leq&  \frac{1}{m(A^\pm_{v_M})} \sum_{a \in A^\pm_{v_M}}| \varphi(v_{M})| m(a) = |\varphi(v_M)|.
\end{aligned}
$$
Therefore, 
$$|\varphi(v_M)| =\frac{1}{m(A^\pm_{v_M})} \sum_{a \in A^\pm_{v_M}} |\varphi(v_{M,a})| m(a)\;.$$ 

Since $|\varphi(v_M)|$ is maximal and coincides with the weighted mean of positve numbers we conclude that $|\varphi(v_M)|=|\varphi(v_{M,a})|$ for any $a\in A^\pm_{v_M}$. Moreover,
the triangle inequality used in the first inequality above turns to be an equality only when the summands $\varphi(v_{M,a})$ are pairwise multiples by a non-negative real number. Therefore,
$\varphi(v_M) = \varphi(v)$ for all $v \in N_{v_M}^\pm$ (in/out-neighborhood of $v_M$, Definition~\ref{def:disc_graphs}).
Reasoning recursively from the in/out-neighbors of $v_M$ and since there is only one source/sink in $G$, there will be a vertex $v_1\in F/S$ satisfying $\varphi(v_M) = \varphi(v_1)$. So, $\max_{v\in V}|\varphi(v)|$ is attained on $F/S$. As sources and sinks are by definition SCCs, we conclude similarly that $\varphi(v)$ must be constant on $F/S$.
\end{proof}

We prove next spectral characterization of sources and sinks of general weighted digraphs
(see also \cite{bjorner-lovasz:92,veerman:06,bauer:12}).

\begin{theorem}\label{theo:zero_io_lap}
Let $(G, m)$ be a weighted digraph $G=(V, A, \partial)$ and $\mathcal{L}^{\pm} = (d^\pm)^{*} d$ the corresponding 
in$/$out Laplacians.
    
\centerline{$G$ has one source/sink $\Longleftrightarrow$ $0$ is a simple eigenvalue of $\mathcal{L}_G^{+/-}$.}\vspace{2mm}
    
Moreover, the multiplicity of the eigenvalue 0 of $\mathcal{L}_G^{+/-}$ gives the number of sources$/$sinks of $G$.
\end{theorem}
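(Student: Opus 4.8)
The plan is to treat the out-Laplacian $\mathcal{L}^-$ and sinks; the statement for $\mathcal{L}^+$ and sources is identical after reversing every arc (which swaps $d^+$ with $d^-$, and sources with sinks). Everything then reduces to computing the multiplicity of $0$ in $\sigma(\mathcal{L}_G^-)$, the biconditional being the special case $s=1$ of the counting assertion. First I would invoke the block decomposition of Proposition~\ref{prop:spec_par}(ii): relabelling $V=(T\sqcup F)\sqcup S_1\sqcup\dots\sqcup S_s$ puts $\mathcal{L}_G^-$ in block-triangular form with the sink blocks $\mathcal{L}_{S_i}^-$ and the block $\mathcal{L}_{T\sqcup F}^-$ along the diagonal, so that
$$
\det(\mathcal{L}_G^--\lambda\1)=\det(\mathcal{L}_{T\sqcup F}^--\lambda\1)\cdot\prod_{i=1}^{s}\det(\mathcal{L}_{S_i}^--\lambda\1).
$$
Hence the multiplicity of $0$ in $\sigma(\mathcal{L}_G^-)$ is the sum of the contributions of the diagonal blocks, and it suffices to show that each sink block contributes exactly one while the stream-and-source block contributes none.

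For a sink block, Proposition~\ref{prop:compression_on_SF} identifies $\mathcal{L}_{S_i}^-$ with the out-Laplacian of the induced digraph $G_{S_i}$, which is strongly connected since it is an SCC. By Proposition~\ref{prop:lap_d_exp} the constants lie in its kernel, and as $G_{S_i}$ has a unique sink (itself), Lemma~\ref{lem:phi_in_ker}(\ref{lem:phi_in_ker:max}) forces every kernel element to be constant on that sink, i.e.\ everywhere; thus $\dim\ker\mathcal{L}_{S_i}^-=1$. That $0$ is moreover a \emph{simple} root of the characteristic polynomial (so that geometric and algebraic multiplicities agree) follows from the irreducibility of the strongly connected $G_{S_i}$ by a standard Perron--Frobenius argument: for $c$ large the matrix $c\,\1-\mathcal{L}_{S_i}^-$ is nonnegative and irreducible, the positive constant is its Perron eigenvector, and the corresponding spectral radius $c$ is algebraically simple. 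Each sink therefore contributes exactly one to the multiplicity of $0$.

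The heart of the proof is to show that $\mathcal{L}_{T\sqcup F}^-$ is invertible, i.e.\ that $0$ is not an eigenvalue. Given $\psi\in\ker\mathcal{L}_{T\sqcup F}^-$, I would extend it by zero on $S$ to $\widehat\psi\in\ell_2(V,m)$. Since $\widehat\psi$ vanishes on $S$, evaluating $\mathcal{L}_G^-\widehat\psi$ on $T\sqcup F$ reproduces $\mathcal{L}_{T\sqcup F}^-\psi=0$, so by Lemma~\ref{lem:phi_in_ker}(i) the function $\widehat\psi$ satisfies the out mean-value identity at every vertex of $T\sqcup F$ (each such vertex has an outgoing arc to another vertex, since a vertex lacking one would itself be a sink). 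Now suppose $\widehat\psi\not\equiv0$ and let $v_M$ attain the maximum modulus; it cannot lie in $S$, so it lies in $T\sqcup F$, and the equality case of the triangle inequality — exactly as in the proof of Lemma~\ref{lem:phi_in_ker}(\ref{lem:phi_in_ker:max}) — forces $\widehat\psi$ to take the same maximal value at every out-neighbour of $v_M$. Propagating this along directed paths, which by finiteness and Proposition~\ref{prop:ss_existence} must terminate in a sink, I reach a vertex of $S$ where $\widehat\psi=0\neq\widehat\psi(v_M)$, a contradiction. Hence $\widehat\psi\equiv0$, $\psi=0$, and $\mathcal{L}_{T\sqcup F}^-$ has trivial kernel, contributing nothing.

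Combining the three steps gives that the multiplicity of $0$ in $\sigma(\mathcal{L}_G^-)$ equals $s$, the number of sinks, and the case $s=1$ yields the biconditional. The degenerate situations set aside in Proposition~\ref{prop:spec_par} — a digraph with no arcs (every vertex a sink, $\mathcal{L}^-=0$) or a strongly connected one (a single sink coinciding with the source) — are covered directly by the sink-block analysis. I expect the main obstacle to be the invertibility of $\mathcal{L}_{T\sqcup F}^-$: unlike the sink blocks it is a Dirichlet-type compression rather than a genuine graph Laplacian, so the maximum principle must be run on the zero-extension $\widehat\psi$, and one has to argue carefully that forward propagation of the maximal modulus always reaches a sink where the function was set to zero.
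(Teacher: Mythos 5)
Your proof is correct, and its skeleton is the same as the paper's: the block-triangular form of Proposition~\ref{prop:spec_par}, the identification of each diagonal sink block with the out-Laplacian of the induced subdigraph via Proposition~\ref{prop:compression_on_SF}, and invertibility of the remaining block proved by zero-extension plus propagation of the maximal modulus along directed paths into a sink --- this last step is exactly the paper's argument that $0\notin\sigma(\mathcal{L}_{R}^+)$. There are two genuine differences. First, organization: the paper proves the implication ``one source $\Rightarrow$ $0$ simple'' separately by a whole-graph kernel argument (for $\varphi\in\ker\mathcal{L}^+$ it forms $\xi=\1-\tfrac{\1(u_0)}{\varphi(u_0)}\varphi$ with $u_0$ in the source and uses Lemma~\ref{lem:phi_in_ker}~(\ref{lem:phi_in_ker:max}) to force $\xi\equiv 0$), and only afterwards runs the block argument for the count; you instead deduce the biconditional as the case $s=1$ of the counting statement, which is more economical. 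Second, and more substantively, your Perron--Frobenius step has no counterpart in the paper, and it actually repairs a real subtlety: the paper's multiplicities are algebraic (its count rests on the factorization of $\det(\mathcal{L}_G^+-\lambda\1)$ into the blocks' characteristic polynomials), yet its arguments only show that each source/sink block has one-dimensional \emph{kernel}, i.e.\ geometric simplicity of $0$; nothing in the paper rules out a nontrivial Jordan block at $0$ inside a strongly connected block, which would inflate the algebraic count. Your observation that $c\,\1-\mathcal{L}_{S_i}^-$ is nonnegative and irreducible for large $c>0$, with the positive vector $\bigl(\sqrt{m(v)}\bigr)_v$ (the coordinates of the constant function) as eigenvector for the eigenvalue $c$, pins $c$ down as the algebraically simple Perron root and hence gives algebraic simplicity of $0$ on each sink block. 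One small tidy-up: when an SCC is simultaneously a source and a sink (beyond your two degenerate examples, this can happen for an isolated strongly connected piece of a disconnected digraph), the partition should be read as $V=\bigl(V\setminus\bigcup_i S_i\bigr)\sqcup\bigl(\bigcup_i S_i\bigr)$, mirroring the paper's choice $R=V\setminus\bigcup_i F_i$; with that convention all three of your steps go through unchanged.
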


\begin{proof}
To show the implication ($\Longrightarrow$) we consider only the case of sources since a similar reasoning can be applied to sinks. Recall from Proposition~\ref{prop:lap_d_exp} that the constant function $\1(v)$ is always an eigenfunction of the eigenvalue 0. Now let  $\varphi \in \operatorname{ker}(\mathcal{L}^+)$  and recall from Lemma~\ref{lem:phi_in_ker}~(\ref{lem:phi_in_ker:max}) that the restriction 
of $\varphi$ to $F$ (denoted by $\varphi\!\left. \right|_{F}$)
is constant and that $|\varphi(u)| = \max_{v\in V}\{|\varphi(v)|\}$ for any $u \in F$. Since 
$\varphi \!\left. \right|_{F} \neq 0$, we can define a new eigenfunction $\xi(v)$ as the following linear combination of the eigenfunctions $\varphi$ and $\1$: fixing a $u_0\in F$ consider
$$
\xi(v) := \1(v) - \frac{\1(u_0)}{\varphi(u_0)} \varphi(v). 
$$
By construction, $\xi(u_0)=0$ and from Lemma~\ref{lem:phi_in_ker}~(\ref{lem:phi_in_ker:max}) we conclude 
$\xi(v)=0$, $v\in V$. This shows that $\varphi(v) =\varphi(u_0)$, $v\in V$, is constant as well and $0$ is a simple eigenvalue of $\mathcal{L}_G^+$.

To prove the reverse implication ($\Longleftarrow$) suppose $G$ has sources denoted as $F_1,F_2,...,F_c$ with $c \geq 2$. Then by Proposition~\ref{prop:spec_par},
\[ 
\sigma(\mathcal{L}_G^+) = \bigcup\limits_{i=1}^{c} \sigma(\mathcal{L}_{F_i}^+) \cup \sigma(\mathcal{L}_{R}^+),\quadtext{where} 
R = V\setminus \left(\bigcup\limits_{i=1}^{c}F_i\right) \;.
\]

From Proposition~\ref{prop:compression_on_SF}, each $\mathcal{L}_{F_i}^+$ represent the in-Laplace operator for each digraph induced by $F_i$. Thus, every $\sigma(\mathcal{L}_{F_i}^+)$ contains $0$ with constant eigenfunction supported on $F_i$. We conclude that eigenvalue 0 has at least multiplicity $c \geqslant 2$. This proves the first part of the theorem. For the last sentence, we only need to show that eigenvalue 0 has multiplicity $c$. It is enough to show that $0\not\in \sigma(\mathcal{L}_{R}^+)$. Let $\mathcal{L}_G^+$ be represented by

$$
\mathcal{L}^+ = 
\left( \begin{array}{c|c|c|c|c}
	\mathcal{L}_{R}^+ &* &  \cdots & \cdots & *\\
	\hline
	0& \mathcal{L}_{F_1}^+ &0 & \cdots & 0 \\
	\hline
	0& 0&  \mathcal{L}_{F_2}^+  & \ddots & \vdots\\
	\hline
	\vdots& \vdots& \ddots & \ddots & 0 \\
	\hline
	0& 0& \hdots & 0 & \mathcal{L}_{F_c}^+\\
\end{array}\right).
\vspace{3mm}
$$

\noindent Assume $0 \in \sigma(\mathcal{L}_{R}^+)$, then there is an eigenfunction $\psi \in \ell_{2}(V, m)$ that can be extended to an eigenfunction $\hat{\psi} \in \ell_{2}(V, m)$. A matrix representation  would be the following\vspace{-2mm}
$$
\mathcal{L}^+ \hat{\psi} = \left( \begin{array}{c|c}
	\mathcal{L}_{R}^+ &* \\
	\hline
	0& *\\
\end{array}\right)\cdot\left( \begin{array}{c}
	\psi\\
	\hline
	0 \\
\end{array}\right) = 0.
$$
But using again that the maximum of $|\hat{\psi}|$ is attained on a source of $G$ (cf. Lemmas~\ref{lem:phi_in_ker}~(\ref{lem:phi_in_ker:max})) we conclude that $\hat{\psi} = 0$, contradicting the fact that $\psi$ is an eigenfunction.
\end{proof}

The following result is direct consequence of 
Remark~\ref{rem:dir_com}~(\ref{rem:dir_com:dcomp_fs}) and Theorem~\ref{theo:zero_io_lap}.
\begin{corollary}
G has one directed component if and only if $0$ is a simple eigenvalue of both $\mathcal{L}_G^+$ and $\mathcal{L}_G^-$.
\end{corollary}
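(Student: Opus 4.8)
The plan is to obtain this statement as a straightforward chaining of the two cited results, with no new computation required. First I would record the combinatorial bridge provided by \Remenum{dir_com}{rem:dir_com:dcomp_fs}: the digraph $G$ has exactly one directed component if and only if $G$ possesses exactly one source and exactly one sink. This reduces the claim about directed components to a purely enumerative claim about the numbers of sources and sinks, which are the quantities the preceding spectral theory controls.

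Next I would invoke the quantitative part of Theorem~\ref{theo:zero_io_lap}, namely that the multiplicity of the eigenvalue $0$ of $\mathcal{L}_G^{+}$ equals the number of sources of $G$, and that the multiplicity of $0$ of $\mathcal{L}_G^{-}$ equals the number of sinks of $G$. In particular, $0$ is a \emph{simple} eigenvalue of $\mathcal{L}_G^{+}$ precisely when $G$ has exactly one source, and $0$ is a simple eigenvalue of $\mathcal{L}_G^{-}$ precisely when $G$ has exactly one sink. Crucially, source-counting governs $\mathcal{L}^{+}$ and sink-counting governs $\mathcal{L}^{-}$ independently, so the two simplicity conditions may be imposed simultaneously without interaction.

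Combining the two equivalences then yields the corollary directly: $G$ has one directed component if and only if $G$ has exactly one source and exactly one sink, which in turn holds if and only if $0$ is simple for $\mathcal{L}_G^{+}$ and $0$ is simple for $\mathcal{L}_G^{-}$. There is essentially no obstacle in this argument; the only point requiring minor care is to use the full multiplicity statement of Theorem~\ref{theo:zero_io_lap} rather than merely its qualitative ``one source/sink $\Longleftrightarrow$ $0$ simple'' form, so that the conjunction over the two Laplacians matches the conjunction ``one source \emph{and} one sink'' coming from the remark. With that noted, the biconditional is immediate.
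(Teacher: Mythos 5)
Your proposal is correct and follows exactly the paper's intended argument: the paper derives this corollary as a direct consequence of Remark~\ref{rem:dir_com}~(\ref{rem:dir_com:dcomp_fs}) (one directed component $\Leftrightarrow$ one source and one sink) combined with the multiplicity statement of Theorem~\ref{theo:zero_io_lap}, which is precisely your chain of equivalences. Your added remark about using the full multiplicity version of the theorem (so the two simplicity conditions can be imposed jointly) is a sensible clarification, but it does not change the route.
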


We conclude this section illustrating some of the results of the previous results in some concrete examples.

\begin{example}\label{exa:ef_l_d_est} Consider the following digraph $G$ with combinatorial weights 
(i.e., $m=1$) represented in Fig.~\ref{exa:ef_l_d_est:fig} with vertices $\{v_1,\dots,v_7\}$ together with the corresponding matrix representations of the in/out-Laplacians $(\mathcal{L}^{+/-})$ (cf. Remark~\ref{rem:ONB}). 

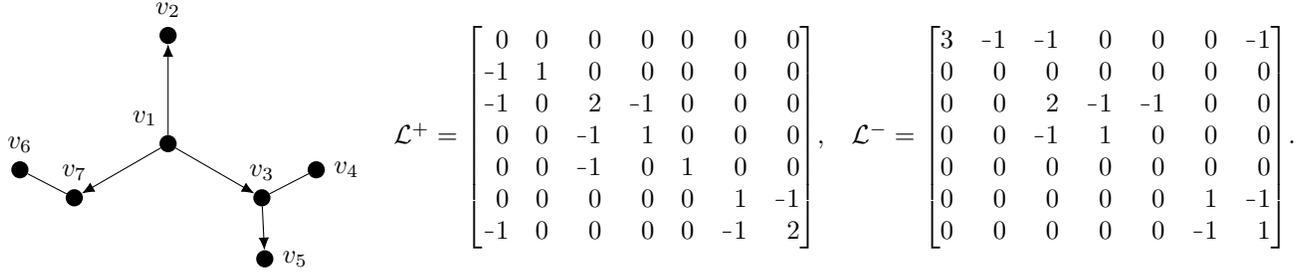
\begin{figure}[h]
		\begin{tabular}{ccc}
		    {\begin{tikzpicture}[baseline,vertex/.style={circle,draw,fill, thick, inner sep=0pt,minimum size=2mm},scale=0.8]
		            \node (1) at ( 60:0cm) [vertex,label=100:$v_1$] {};
					\node (2) at ( 90:1.8cm) [vertex,label=above:$v_2$] {};
                    \node (3) at (-30:1.8cm) [vertex,label=above:$v_3$] {};
                    \node (4) at (-10:2.5cm) [vertex,label=right:$v_4$] {};
                    \node (5) at (-50:2.5cm) [vertex,label=right:$v_5$] {};
                    \node (6) at (190:2.5cm) [vertex,label=above:$v_6$] {};
                    \node (7) at (210:1.8cm) [vertex,label=above:$v_7$] {};
					\draw[-{Latex[ width=1.4mm]}](1) to node[] {} (2) ;
					\draw[-{Latex[ width=1.4mm]}](1) to node[] {} (3) ;
					\draw[-{Latex[ width=1.4mm]}](1) to node[] {} (7) ;
					\draw[-{Latex[ width=1.4mm]}](3) to node[] {} (5) ;
					\draw (3) edge node[above] {} (4);
					\draw (6) edge node[above] {} (7);
    			\end{tikzpicture}}& {$\mathcal{L^+} = 
				\begin{bmatrix*}[r]
					0 & 0 & 0 & 0 & 0 & 0 & 0 \\
					\shortminus1 & 1 & 0 & 0 & 0 & 0 & 0\\
					\shortminus1 & 0 & 2 & \shortminus1 & 0 & 0 & 0 \\
					0 & 0 & \shortminus1 & 1 & 0 & 0 & 0 \\
					0 & 0 & \shortminus1 & 0 & 1 & 0 & 0 \\
					0 & 0 & 0 & 0 & 0 & 1 & \shortminus1 \\
					\shortminus1 & 0 & 0 & 0 & 0 & \shortminus1 & 2  \\
				\end{bmatrix*}  $,} 
            &{$\mathcal{L^-}= 
				\begin{bmatrix*}[r]
					3 & \shortminus1 & \shortminus1 & 0 & 0 & 0 & \shortminus1 \\
					0 & 0 & 0 & 0 & 0 & 0 & 0 \\
					0 & 0 & 2 & \shortminus1 & \shortminus1 & 0 & 0 \\
					0 & 0 & \shortminus1 & 1 & 0 & 0 & 0 \\
					0 & 0 & 0 & 0 & 0 & 0 & 0 \\
					0 & 0 & 0 & 0 & 0 & 1 & \shortminus1 \\
					0 & 0 & 0 & 0 & 0 & \shortminus1 & 1 \\
				\end{bmatrix*} $.}
		\end{tabular}
        \caption{Matrices $(\mathcal{L}^{+}),(\mathcal{L}^{-})$ for a combinatorial digraph.} 
		\label{exa:ef_l_d_est:fig}
\end{figure}
The eigenvalues (taking multiplicities into account) of the in/out Laplacians are given by 
$$
    \sigma((\mathcal{L^+})) \approx \{-1, 0, 0.38, 0.38, 1, 2.62, 2.62\}
    \quadtext{and}
    \sigma((\mathcal{L^-})) \approx \{0, 0, 0, 0.38, 2, 2.62, 3\}\;.
$$
Note that the multiplicity of $0$ in the corresponding spectra coincides with the number of sources and sinks, respectively. In fact, for the graph given in Fig.~\ref{exa:ef_l_d_est:fig} we have one source
$F=\{v_1\}$ and three sinks: $S_1:=\{v_2\}$, $S_2:=\{v_5\}$ and $S_3:=\{v_6,v_7\}$.
\end{example}

As mentioned in the introduction to this article, an important difference in spectral graph theory for digraphs is that the corresponding Laplacians are not selfadjoint and, therefore, their spectra need not be real. We conclude this section showing that acyclic digraphs do have real spectrum. Recall that an {\em acyclic digraph} is a digraph with no directed cycles.

\begin{proposition}(Spectrum of acyclic digraphs) 
Let $(G, m)$ with $G=(V, A, \partial)$ be an acyclic digraph. Then the spectrum of $\mathcal{L}^\pm$ is contained in $[0,\infty)$. Moreover, for combinatorial weights we have 
$\sigma(\mathcal{L}^\pm)\subset \N_0$.
\end{proposition}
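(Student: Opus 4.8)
The plan is to exploit the fact that a finite acyclic digraph admits a topological ordering of its vertices, which renders the matrix representations of $\mathcal{L}^\pm$ from \Rem{ONB} triangular; the spectrum can then simply be read off the diagonal.

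First I would invoke the standard characterization of finite acyclic digraphs: the vertices can be enumerated as $v_1,\dots,v_n$ so that every arc $a$ satisfies $\partial_- a = v_i$ and $\partial_+ a = v_j$ with $i<j$ (all arcs point ``forward''). Such an ordering exists because, in the absence of directed cycles, the reachability relation $\rightsquigarrow$ of \Def{dir_path} is antisymmetric (if $u\rightsquigarrow v$ and $v\rightsquigarrow u$ with $u\neq v$ one would produce a directed cycle), hence a partial order, and any finite partial order extends to a linear one; an arc from $u$ to $v$ then gives $u\rightsquigarrow v$, so $u<v$ in the extension. In particular, since a loop is a directed cycle of length one, an acyclic digraph has no loops and $A^\pm(v,v)=\varnothing$ for all $v\in V$.

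Next I would read off the entries from \Rem{ONB} in this ordering. For $i\neq j$ the in-Laplacian entry $(\mathcal{L}^+)_{v_j v_i} = -m(A^+(v_i,v_j))/\sqrt{m(v_i)m(v_j)}$ is non-zero only if there is an arc from $v_i$ to $v_j$, which by the ordering forces $i<j$; thus every non-zero off-diagonal entry lies strictly below the diagonal and $\mathcal{L}^+$ is lower triangular. Since loops are absent, the diagonal reduces to $(\mathcal{L}^+)_{v_i v_i}=\operatorname{rel}_m^+(v_i)=m(A_{v_i}^+)/m(v_i)\geq 0$. As the eigenvalues of a triangular matrix are exactly its diagonal entries, $\sigma(\mathcal{L}^+)=\{\operatorname{rel}_m^+(v_i)\}_{i=1}^n\subset[0,\infty)$. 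The out-Laplacian is handled identically: its off-diagonal entry $(\mathcal{L}^-)_{v_j v_i}$ involves $A^-(v_i,v_j)$, i.e.\ arcs from $v_j$ to $v_i$, which is non-empty only for $j<i$, so $\mathcal{L}^-$ is upper triangular with diagonal $\operatorname{rel}_m^-(v_i)\geq 0$.

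Finally, for the combinatorial weight $m=1$ the diagonal entries become $\operatorname{rel}_1^\pm(v_i)=\operatorname{deg}^\pm(v_i)=|A_{v_i}^\pm|\in\N_0$, which yields $\sigma(\mathcal{L}^\pm)\subset\N_0$. I expect the only delicate step to be the bookkeeping of the orientation conventions of \Def{disc_graphs}, namely verifying that the non-zero off-diagonal terms of $\mathcal{L}^+$ and of $\mathcal{L}^-$ fall on \emph{opposite} sides of the diagonal, rather than the existence of the ordering, which is routine. Once triangularity is established, the spectral conclusion is immediate and is purely a statement about the diagonal entries $\operatorname{rel}_m^\pm(v)$.
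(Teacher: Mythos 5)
Your proposal is correct and follows essentially the same route as the paper: both fix a vertex labeling that is monotone with respect to $\rightsquigarrow$ (a topological ordering), observe via \Rem{ONB} that $\mathcal{L}^+$ and $\mathcal{L}^-$ then become lower and upper triangular with diagonal entries $\operatorname{rel}_m^\pm(v_i)$, and read the spectrum off the diagonal, with the combinatorial case reducing to in/out degrees. The only divergence is the justification of the ordering itself — you use antisymmetry of $\rightsquigarrow$ on an acyclic digraph plus linear extension of a finite partial order, while the paper runs an iterative transposition argument that successively decreases the maximal violation of monotony — but this is a cosmetic difference, as both are standard arguments for the existence of a topological sort.
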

\begin{proof}
Given a finite weighted digraph $(G, m)$ we divide the proof into two steps.

First, we show that there is a vertex labeling $V = \{v_i\}_{i=1}^n$ satisfying the following monotony property: if $v_i \rightsquigarrow v_j$ then $i<j$. Note that it is enough to prove this labeling exists for a $d$-component of $G$, since otherwise, one may label vertices consecutively in each $d$-component. 
To show this, note first that if $d$-connected graph $G$ has a cycle then the mentioned labeling is not possible since any pair of vertices of the cycle $v_i$,$v_j$ with $i\not= j$ satisfy $v_i \rightsquigarrow v_j$ \emph{and} $v_j \rightsquigarrow v_i$. Consider a labeling of vertices $\{v_1,\dots,v_n\}$ and define the set 
\[
 \cP:=\{(i,j)\in [n]\times [n] \mid i\not=j\quadtext{and} v_i\rightsquigarrow v_j \}\;.
\]
Note that since $G$ is $d$-connected and acyclic for any pair of distinct indices $(i,j)$ either 
$(i,j)\in\cP$ or $(j,i)\in\cP$ and that if $(i,j)\in\cP$ then, necessarily, $(j,i)\not\in\cP$.
Assume now that for the chosen labeling there is a pair of indices violating monotony and choose 
$(i,j)\in\cP$ such that $i-j>0$ is maximal. 
Consider then a relabeling given by a transposition $\tau$ permuting $i$ with $j$. Note that as $G$ is acyclic  $v_{\tau(i)} \not\rightsquigarrow v_{\tau(j)}$. We analyze now how this maximal difference behaves after relabeling for which we consider the following four cases:

\setlength{\columnsep}{-2cm}
\vspace{-0.2cm}
\begin{multicols}{2}
    \begin{itemize}
        \item For any $v_f \rightsquigarrow v_{\tau(j)}$, $f-\tau(j)=f-i<f-j$.
        \item For any $v_{f'} \rightsquigarrow v_{\tau(i)}$, $f'-\tau(i)=f'-j<i-j$.
        \item For any $v_{t}\in V$ with $v_{\tau(j)} \rightsquigarrow v_{t}$, $\tau(j)-t=i-t<i-j$.
        \item For any $v_{t'}\in V$ with $v_{\tau(i)} \rightsquigarrow v_{t'}$, $\tau(i)-t'=j-t'<i-j$.
    \end{itemize}
\end{multicols}
\vspace{-0.2cm}
Since the set of vertices is finite and after iteration of the process, 
the maximal difference will be lowered consecutively. Therefore, eventually one obtains a labeling satisfying the monotony condition.

Second, recall the definition of the in/out relative weight $\mathrm{rel}^\pm(v)$ given in Eq.~(\ref{eq:rel}) for any vertex $v$. Following Remark~\ref{rem:ONB} and using the previous vertex labeling we obtain that the matrix representations of $\mathcal{L^+}$ and $\mathcal{L^-}$ are lower and upper triangular, respectively:

\begin{figure}[h]
		\begin{tabular}{cc}
		    {$\mathcal{L^+} = 
				\begin{bmatrix*}
					\operatorname{rel^+}(v_1) & 0 & 0 & \hdots & 0  \\
					* & \operatorname{rel^+}(v_2) & 0 &\hdots & 0 & \\
					* & * & \operatorname{rel^+}(v_3) & \hdots & 0  \\
					\vdots &\vdots & \vdots & \ddots & 0  \\
					* & * & * & * & \operatorname{rel^+}(v_n)  \\
				\end{bmatrix*}  $,} 
            &{$\mathcal{L^-} = 
				\begin{bmatrix*}
					\operatorname{rel}^-(v_1) & *  & *  & \hdots & *   \\
					0& \operatorname{rel}^-(v_2) & *  &\hdots & *  & \\
					0& 0& \operatorname{rel}^-(v_3) & \hdots & *   \\
					\vdots &\vdots & \vdots & \ddots & *   \\
					0& 0& 0& 0& \operatorname{rel}^-(v_n)  \\
				\end{bmatrix*}  $.} 
		\end{tabular}
        \caption{Matrices $\mathcal{L}^\pm$ for an acyclic digraph} 
		\label{exa:L_acyclic:fig}
\end{figure}
Since $\operatorname{rel^{\pm}}(v) \in [0,\infty) $
for every $v\in V$ (cf. Eq.~(\ref{eq:rel})) and $\mathcal{L}^\pm$ are lower/upper triangular matrices, the corresponding spectra consist, precisely, of the diagonal entries 
$\operatorname{rel^{\pm}}(v)$ and we conclude that $\sigma (\mathcal{L}^\pm) \subset [0,\infty)$
(see \cite[p.~39]{horn-johnson:90}).
In particular, for combinatorial weights, the spectrum is given by non-negative integers, i.e., 
$\sigma (\mathcal{L}^\pm) \subset \N_0$, since the relative weight is in this case a non-negative integer (see Example~\ref{ex:comb_w}).
\end{proof}

\section{A geometrical interpretation of circulations and flows in networks}\label{sec:flows}

We continue exploring here geometrical consequences from the picture developed in the previous sections. We will generalize to weighted multidigraphs notions like circulations, flows, and related quantities like values of a flow and capacities of a cut.
We will see that these concepts are closely related to discrete divergences and in/out Laplacians $\cL^{+/-}$. For additional results and motivation on these topics we refer to standard references like, e.g., \cite[Chapter~7]{bondy-murty:08} or \cite[Chapter~6]{diestel:17}.

In our language a \emph{circulation} in a digraph $G=(V, A, \partial)$ with combinatorial weights
is a function $\eta\in\ell_2(A)$ satisfying the following conservation law for any $v\in V$
\begin{equation}\label{eq:kirchhoff}
 \sum_{a\in A_{v}^+}\eta(a)=\sum_{a\in A_{v}^-}\eta(a)\;.
\end{equation}
Geometrically, this means that a circulation $\eta$ can be interpreted as a divergenceless function i.e., $d^*\eta=0$ or, equivalently, $d^+\eta(v)=-d^-\eta(v)$, $v\in V$ with the conventions in Subsection~\ref{subsec:graph_theo_operators}. 
This notion has an immediate generalization to weighted digraphs: A \emph{circulation} in a weighted digraph $(G,m)$ with $G=(V, A, \partial)$ is a function $\eta\in\ell_2(A,m)$ satisfying the following conservation law for any $v\in V$
\[
 \sum_{a\in A_{v}^+}m(a)\eta(a)=\sum_{a\in A_{v}^-}m(a)\eta(a)\;,
\]
i.e., a divergenceless function on the set of arcs. If we denote by $\cC$ the set of circulations of a weighted digraph we obtain directly from the definition of discrete divergence in Eq.~(\ref{eq:div}) that $\cC$ is the set of {\em coclosed forms on arcs}, i.e.,
\[
 \cC=\mathrm{ker}d^*\;.
\]
Again, the set of circulations is perpendicular to the set of gradients of functions on the vertex set ({\em exact forms on arcs}), i.e,
\[
 \cC\perp \{ d\varphi\mid \varphi\in \ell_2(V,m) \}\quadtext{since for any} \varphi\in \ell_2(V,m) \quadtext{we have}
  \langle\eta,d\varphi\rangle = \langle d^*\eta,\varphi\rangle=0\;.
\]

A transportation network $N(V_0,V_1)$ is a digraph with two distinguished sets of vertices
$V_0\subset V$ (respectively, $V_1\subset V$) which are sources (respectively, sinks) of the weighted digraph. An $(V_0,V_1)$-flow is a function $\eta\in\ell_2(A,m)$ satisfying the following conservation law on the complement of sources and sinks, i.e. for any 
$v\in V\setminus W$ with $W:=V_0\cup V_1$ we have
\[
 \sum_{a\in A_{v}^+}m(a)\eta(a)=\sum_{a\in A_{v}^-}m(a)\eta(a)\;.
\]
Moreover, the set of $(V_0,V_1)$-flows, which we denote by $\cC_{V_0,V_1}$, has a natural geometrical description. Let
\[
 \iota\colon \ell_2\Big((V\setminus W),m\Big)\hookrightarrow \ell_2(V,m)
\]
be the natural embedding of Hilbert spaces extending by $0$ the functions on $W$. In other words
the space $\ell_2((V\setminus W),m)$ can be identified with the subspace of 
$\ell_2(V,m)$ having Dirichlet conditions on the vertices $W$ (i.e., $\varphi(v)=0$, $v\in W$).
Define the discrete gradient on $\ell_2((V\setminus W),m)$ by the natural composition
\[
 d_0:=d\circ \iota\colon\ell_2\Big((V\setminus W),m\Big)\to\ell_2(A,m)
 \quadtext{with adjoint} 
 d_0^*=\iota^*\circ d^*\colon \ell_2(A,m)\to\ell_2\Big((V\setminus W),m\Big)\;,
\]
where $\iota^*$ is the orthogonal projection onto the subspace $\ell_2((V\setminus W),m)$. Then, a similar computation as before, shows that a $(V_0,V_1)$-flow $\eta$ is just an element of 
$\mathrm{ker}d_0^*$ and
\[
 \cC_{V_0,V_1}\perp \{ d_0\varphi\mid \varphi\in \ell_2((V\setminus W),m) \}\;.
\]

Transportation networks are usually defined together with a so-called \textit{capacity} $c$ 
which is a non-negative real-valued function defined on the set of arcs $A$. This function may be 
interpreted as the maximal rate at which some quantity can flow through the arc. In other words, the function defines the upper limit of the flow rate through the arc. In this context, a flow $\eta$ is \textit{feasible} if it takes non-negative values and does not exceed the maximal amount that can be transported through a given arc, i.e., $0\leq \eta(a) \leq c(a), \, a \in A$. Mathematically, we will interpret the capacity as a weight on the arcs (see Definition~\ref{def:weight_func}) and relate important quantities like values or capacities of flows in networks in terms of the corresponding (weighted) divergence and Laplacians. 
For simplicity, we assume weight $1$ on the vertices but any other vertex weight could be used with identical formulas. In this context, it is standard to modify a transportation network by adding two new vertices $x,y$ so that the 
$(V_0,V_1)$-flow can be rewritten as an $(x,y)$-flow with a single source $x$ and sink $y$. This is the case usually treated in the literature (see, for example, Chapter~7 in \cite{bondy-murty:08}).

\begin{itemize}
 \item[(i)] \emph{Capacity of a cut:} Consider a subset of vertices $X\subset V$ such that $x\in X$, $y\in \compl{X}$ and let $K^+:=A^+(X,\compl{X})$ and $K^-:=A^-(X,\compl{X})$ be an out-cut and in-cut of a digraph $G$, respectively. Then the capacity of the cut is defined by 
 $\operatorname{cap}(K^+) := \sum_{a \in K^+} c(a)$, i.e. with the weight interpretation it corresponds to the measure of the cut $K^+$. We obtain next an expression in terms of the out Laplacian:
\begin{eqnarray*}
\operatorname{cap}(K^+) &=& \sum_{a \in A} c(a) \cdot \1_{K^+}(a) = -\sum_{a \in A} \1_{X}(\partial_{-} a) \cdot[\1_{X}(\partial_{+} a) - \1_{X}(\partial_{-} a)] \cdot c(a) \\
                        &=&  \left\langle d^-\1_{X} ,d \1_{X}\right\rangle_{\ell_2(A,c)}
                         =    \left\langle \1_{X} ,\mathcal{L}^-\1_{X}\right\rangle_{\ell_2(V)}\;.
\end{eqnarray*}
In the previous formula the out Laplacian $\cL^-$ is considered taking the capacity function $c$ of the network as weight on the arcs. Similar formulas hold for the in-cut.

\item[(ii)] \emph{Value of a flow:}
If $\eta$ is an $(x,y)$-flow $\eta$ in $G$, we define its value $\operatorname{val}(\eta)$ as the net flow out of
the source $x$ (which coincides, due to the conservation conditions, with the net flow into $y$). Using the cut $X$ as before as well as the notions of discrete divergence in Subsection~\ref{subsec:graph_theo_operators} we obtain
\[
\begin{aligned}
    \operatorname{val}(\eta) :=& \sum_{a \in A^-_x} \eta(a) = \sum_{a \in K^+} \eta(a) - \sum_{a \in K^-} \eta(a)
    =\sum_{v \in X} \sum_{a \in A^-_v} \frac{\eta(a)}{c(a)} \cdot c(a) - \sum_{v \in X} \sum_{a \in A^+_v} \frac{\eta(a)}{c(a)} \cdot c(a)\\
    =& -\sum_{v \in X} \sum_{a \in A_v}\vec{\left(\frac{\eta}{c}\right)}_a(v)\cdot c(a) 
    = -\left\langle \1_{X} ,d^*\left(\frac{\eta}{c}\right)\right\rangle_{\ell_2(V)} 
    = -\left\langle d \1_{X} ,\left(\frac{\eta}{c}\right)\right\rangle_{\ell_2(A,c)} \;.
\end{aligned}
\]
The first equality can be verified in \cite[Proposition~7.1]{bondy-murty:08}. Note that, by handling the capacity in the same way as before, the expression for feasible flows is normalized. 
\end{itemize}

To conclude this subsection, we will demonstrate the use of the concepts previously discussed on a digraph.

\begin{example} 
We illustrate in this example the geometrical expressions obtained previously.
Consider the digraph $(G,m)$ given in Fig.~\ref{fig:exa:flow}.
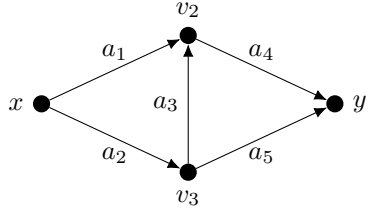
\begin{figure}[h]
		\centering
		\captionsetup{width=0.5\linewidth}
		{ 
			\begin{tikzpicture}[baseline,vertex/.style={circle,draw,fill, thick, inner sep=0pt,minimum size=2mm},scale=1.3]

				\node (1) at (-0.5,0.7) [vertex,label=left:$x$] {};
                \node (2) at (1,1.4) [vertex,label=above:$v_2$] {};
				\node (3) at (1,0)  [vertex,label=below:$v_3$] {};
				\node (4) at (2.5,0.7) [vertex,label=right:$y$] {};	
        
				\draw[-{Latex[ width=1.4mm]}](1) edge node[above] {$a_1$} (2);
				\draw[-{Latex[ width=1.4mm]}](1) edge node[below] {$a_2$} (3);
                \draw[-{Latex[ width=1.4mm]}](3) edge node[left] {$a_3$} (2);
				\draw[-{Latex[ width=1.4mm]}](2) edge node[above] {$a_4$} (4);			
				\draw[-{Latex[ width=1.4mm]}](3) edge node[below] {$a_5$} (4);
		\end{tikzpicture} } 
		\caption{Example of a digraph with one source $F=\{x\}$ and sink $S=\{y\}$.}
		\label{fig:exa:flow}
	\end{figure}
\end{example}

Using the matrix representation of the discrete derivative given by the incidence matrix $\mathcal{B}$ in Remark ~\ref{rem:ONB} we can derive the matrix for $d_0^*$. To obtain the matrix representation for operator $d_0^*$ from $\mathcal{B^*}$, we remove the rows that correspond to the source $x=v_1$ and sink $v_4=y$ vertices. For the sake of systematicity, we note it as $\mathcal{B}_0^*$.

\begin{figure}[h]
		\begin{tabular}{cc}
		    {$\mathcal{B}^* = 
				\begin{bmatrix*}
					1 & 1 & 0 & 0 & 0 \\
					\shortminus1 & 0 & \shortminus1 & 1 & 0 \\
					0 & \shortminus1 & 1 & 0 & 1 \\
                    0 & 0 & 0 & \shortminus1 & \shortminus1 \\
				\end{bmatrix*}  $,} 
            &{$\mathcal{B}_0^* = 
				\begin{bmatrix*}
					\shortminus1 & 0 & \shortminus1 & 1 & 0 \\
					0 & \shortminus1 & 1 & 0 & 1 \\
				\end{bmatrix*}  $.} 
		\end{tabular}
        \caption{Matrices $\mathcal{B}^*$, $\mathcal{B}_0^*$ for a digraph.} 
		\label{fig:exa:flow:mat}
\end{figure}

Thus, we obtain that the kernel for $d_0^*$ is given by three dimensional space
\[
\operatorname{ker}\big(d_0^*\big) =
    \left\{a \cdot \begin{bmatrix*} 0\\1\\1\\1\\0\end{bmatrix*}  + b \cdot \begin{bmatrix*} 1\\0\\0\\1\\0\end{bmatrix*} + c \cdot \begin{bmatrix*} 0\\1\\0\\0\\1\end{bmatrix*}
    \Bigm\vert a,b,c\in\R \right\}\;.
\]
Furthermore, let $X:=\{x, v_2,v_3\}\subset V$ be a subset with $x\in X \not \ni y$ and choose the capacity function $c(a_i):=i$. Then we can compute the corresponding 
\[\operatorname{cap}(K^+):= \sum_{a \in K^+} c(a) = c(a_4) + c(a_5) = 4 + 5 = 9 = \begin{bmatrix*} 1\\1\\1\\0\end{bmatrix*}^T \cdot
				\begin{bmatrix*}[r]
					3 & \shortminus1 & \shortminus2 & 0 \\
					0 & 4 & 0 & \shortminus4 \\
					0 & -3 & 8 & \shortminus5 \\
					0 & 0 & 0 & 0   \\
				\end{bmatrix*}\cdot  \begin{bmatrix*} 1\\1\\1\\0\end{bmatrix*} = \langle \1_X \,,\, {\mathcal{L^-} \1_X \rangle}.
\]

We consider an $(x,y)$-flow denoted by $\eta$, obtained by setting $a=1$, $b=1$, and $c=1$ in the previous expression for $\operatorname{ker}(d_0^*)$. Consequently, the value of $\eta$ is given by

\[
\operatorname{val}(\eta) := \sum_{a \in A^-_x} \eta(a) = \eta(a_1) + \eta(a_2) = 3 =- \begin{bmatrix*} 1\\0\\0\\0\end{bmatrix*}^T \cdot
				\begin{bmatrix*}[r]
					\shortminus1 & \shortminus2 & 0 & 0 & 0 \\
					1 & 0 & 3 & \shortminus4 & 0\\
					0 & 2 & \shortminus3 & 0 & \shortminus5 \\
					0 & 0 & 0 & 4 & 5   \\
				\end{bmatrix*}\cdot  \begin{bmatrix*} 1/1\\2/2\\1/3\\2/4\\1/5\end{bmatrix*}=
-\left\langle d \1_{x} ,\left(\frac{\eta}{c}\right)\right\rangle.\]

%
%
%

\newcommand{\etalchar}[1]{$^{#1}$}
\providecommand{\bysame}{\leavevmode\hbox to3em{\hrulefill}\thinspace}
\providecommand{\MR}{\relax\ifhmode\unskip\space\fi MR }
\providecommand{\MRhref}[2]{%
  \href{http://www.ams.org/mathscinet-getitem?mr=#1}{#2}
}
\providecommand{\href}[2]{#2}

---------------------------------------------

%

\end{document}